\documentclass{amsart}

\usepackage{comment}
\usepackage{amsmath}
\usepackage{amsthm}
\usepackage{amssymb}
\usepackage{amsfonts}
\usepackage[shortlabels]{enumitem}
\usepackage[bookmarks=true,hyperindex,pdftex,colorlinks,citecolor=red,linkcolor=cyan]{hyperref}
\usepackage[all]{xy}

\usepackage{marginnote} 
\usepackage{xcolor}
\newcommand{\clch}[1]{{\color{violet}{#1}}} 

\DeclareMathOperator{\lspan}{span}
\DeclareMathOperator{\supp}{supp}
\DeclareMathOperator{\Lip}{Lip}

\newcommand{\N}{\mathbb{N}}
\newcommand{\Q}{\mathbb{Q}}
\newcommand{\R}{\mathbb{R}}
\newcommand{\C}{\mathbb{C}}
\newcommand{\K}{\mathbb{K}}

\newcommand{\F}{\mathcal{F}}
\newcommand{\Orb}{\mathrm{Orb}}

\theoremstyle{plain}
\newtheorem{theorem}{Theorem}[section]
\newtheorem{lemma}[theorem]{Lemma}
\newtheorem{corollary}[theorem]{Corollary}
\newtheorem{proposition}[theorem]{Proposition}

\theoremstyle{definition}
\newtheorem{definition}[theorem]{Definition}
\newtheorem{example}[theorem]{Example}
\newtheorem{question}{Question}

\theoremstyle{remark}
\newtheorem{remark}[theorem]{Remark}

\begin{document}
	
	\title[Invariant subspaces for free linearizations of Lipschitz maps]{Invariant subspaces for free linearizations of Lipschitz maps}
	\date{} 
	

	\author[C. Coine]{Cl\'ement Coine}
	\address[C. Coine]{UNICAEN, CNRS, LMNO, 14000 Caen, France}
	\email{clement.coine@unicaen.fr}
	
	\author[P. L. Kaufmann]{Pedro L. Kaufmann}
	\address[P. L. Kaufmann]{Universidade Federal de S\~ao Paulo - UNIFESP. Instituto de Ci\^encia e Tecnologia. Departamento de Matem\'atica. S\~ao Jos\'e dos Campos - SP, Brasil}
	\email{\texttt{plkaufmann@unifesp.br}}
	
	\author[C. Petitjean]{Colin Petitjean}
	\address[C. Petitjean]{Univ Gustave Eiffel, Univ Paris Est Creteil, CNRS, LAMA UMR8050, F-77447 Marne-la-Vallée, France}
	\email{colin.petitjean@univ-eiffel.fr}
	
	\author[S. Tapia-Garc\'ia]{Sebasti\'an Tapia-Garc\'ia}
	\address[S. Tapia-Garc\'ia]{Institute of Statistics and Mathematical Methods in Economics, E105-04, TU Wien, Wiedner Hauptstraße 8, A-1040 Wien}
	\email{sebastian.tapia.garcia@tuwien.ac.at}

	\begin{abstract}
		We consider the invariant subspace problem for operators induced by Lipschitz self-maps on Lipschitz-free spaces.
		Besides the usual free linearizations $\widehat f$ of basepoint-preserving Lipschitz self-maps, we consider a wider class of operators $T_{f,e}$ which are naturally defined for arbitrary Lipschitz self-maps.
		We show, among other results, that every $\widehat f$ admits a non-trivial invariant subspace whenever the underlying metric space contains a compact ball, or has at least two connected components one of which has non-empty interior. We also obtain corresponding positive results for $T_{f,e}$ under isolated-point, compact-ball and disconnectedness assumptions, and discuss some consequences for linear dynamics.
	\end{abstract}
	
	\keywords{Lipschitz-free spaces, invariant subspace problem, free linearizations, Lipschitz maps, linear dynamics}
	\subjclass[2020]{Primary 47A15; Secondary 46B20, 54E35, 37B20}
	
	\maketitle

	\section{Introduction}
	
	Let $M$ and $N$ be metric spaces. We denote by $\Lip(M,N)$ the set of all Lipschitz maps $f\colon M\to N$. If $M$ and $N$ are pointed metric spaces, with distinguished points $0_M\in M$ and $0_N\in N$, then $\Lip_0(M,N)$ denotes the set of all $f\in \Lip(M,N)$ such that $f(0_M)=0_N$. In either case, $\Lip(f)$ denotes the optimal Lipschitz constant of $f$, that is,
	$$
	\Lip(f):=\sup_{x\neq y\in M}\frac{d(f(x),f(y))}{d(x,y)}.
	$$
	Throughout the paper, $\K$ denotes either $\R$ or $\C$. If $M$ is pointed, we write
	$$
	\Lip_0(M):=\Lip_0(M,\K).
	$$
	Endowed with the norm $\|f\|_L:=\Lip(f)$, this is a Banach space. For $x\in M$, let $\delta_M(x)\in \Lip_0(M)^*$ be the evaluation functional defined by
	$$
	\langle \delta_M(x),f\rangle=f(x),\qquad f\in \Lip_0(M).
	$$
	The map
	$
	\delta_M\colon x\in M\longmapsto \delta_M(x)\in \Lip_0(M)^*
	$
	is an isometric embedding. The \emph{Lipschitz-free space over $M$} is defined by
	$$
	\F(M):=\overline{\mathrm{span}}^{\|\cdot\|}
	\{\delta_M(x):x\in M\}\subset \Lip_0(M)^*.
	$$
	It is well known that $\F(M)$ is a canonical predual of $\Lip_0(M)$; we shall write
	$$
	\Lip_0(M)\equiv \F(M)^*.
	$$
	Since the choice of the distinguished point will play a role in this paper, we shall write $\F(M,0)$ whenever we wish to emphasize that $0\in M$ is the chosen basepoint.
	\smallskip
	
	One of the most useful features of Lipschitz-free spaces is their linearization property. Let $M$ and $N$ be pointed metric spaces, and let $f\colon M\to N$ be a basepoint-preserving Lipschitz map, that is, $f(0_M)=0_N$. Then there exists a unique bounded linear operator
	$
	\widehat f\colon \F(M,0_M)\to \F(N,0_N)
	$
	such that $\|\widehat f\|=\Lip(f)$ and
	$$
	\widehat f(\delta_M(x))=\delta_N(f(x)),\qquad \forall  x\in M.
	$$
	Equivalently, the following diagram commutes:
	$$
	\xymatrix{
		M \ar[r]^f \ar[d]_{\delta_M}  & N \ar[d]^{\delta_N} \\
		\F(M,0_M) \ar[r]_{\widehat{f}} & \F(N,0_N).
	}
	$$
	In the literature, operators of the form $\widehat f\colon \F(M)\to \F(N)$ are sometimes called ``Lipschitz operators''. We shall instead use the terminology \emph{Lipschitz-free linearization}, or simply \emph{free linearization}, which is more precise and avoids possible ambiguity.
	
	\medskip
	
	A recent line of research aims to characterize linear properties of $\widehat f\colon \F(M)\to \F(N)$ in terms of metric properties of the original Lipschitz map $f\colon M\to N$. Some elementary instances of this principle are the following:
	\begin{enumerate}[$(i)$]
		\item $f$ has dense range if and only if $\widehat f$ has dense range;
		\item $f$ is bi-Lipschitz if and only if $\widehat f$ is an isomorphic embedding;
		\item $f$ is a surjective bi-Lipschitz map if and only if $\widehat f$ is an isomorphism;
		\item if $0\in N\subset M$ and $f\colon M\to N$ is a Lipschitz retraction, then $\widehat f$ is a projection from $\F(M)$ onto $\F(N)$.
	\end{enumerate}
	These facts are straightforward and can be found, for instance, in \cite{Weaver2}. Beyond these basic observations, however, the problem becomes substantially more delicate, even for such fundamental properties as injectivity or surjectivity; see, for example, \cite{Injectivity}. Other successful results in this direction include \cite{ACP21}, where compactness and weak compactness of $\widehat f$ are characterized, and \cite{curveflat}, where Dunford--Pettis and Radon--Nikodým free linearizations are studied.
	
	The present paper belongs to this general program. More precisely, we investigate the invariant subspace problem for free linearizations of Lipschitz self-maps.
	
	\begin{question}\label{Q1}
		Does every operator of the form
		$
		\widehat f\colon \F(M)\to \F(M)
		$
		admit a non-trivial invariant subspace? In other words, does there always exist a closed subspace $S\subset \F(M)$ such that $\{0\}\subsetneq S\subsetneq \F(M)$ and $\widehat f(S)\subset S$?
	\end{question}
	
	Here it is important that the domain and range spaces are exactly the same. This means not only that $f$ is a self-map of $M$, but also that the basepoints involved in the construction coincide. This is a significant restriction: in order to define $\widehat f\colon \F(M,0)\to \F(M,0)$, the map $f\colon M\to M$ must satisfy $f(0)=0$. To avoid this constraint, and thus to consider arbitrary Lipschitz self-maps, we shall use a closely related construction described below.
	\medskip
	
	Recall that changing the basepoint of $M$ does not affect the isometric structure of the corresponding Lipschitz-free space. More precisely, if $a,b\in M$ and $\delta_a : (M,a) \to \F(M,a)$ and $\delta_b : (M,b) \to \F(M,b)$ are the isometric embeddings, then the linear map
	$$
	\Phi_{a,b}:\delta_a(x) \in \F(M,a) \mapsto \delta_b(x)-\delta_b(a) \in \F(M,b)
	$$ 
	defines an onto linear isometry. Let now $M$ be a metric space, let $e\in M$, and let $f\colon M\to M$ be a Lipschitz map. By the linearization property, there is a unique bounded operator
	$
	\widehat f_e\colon \F(M,e)\to \F(M,f(e))
	$
	such that $\widehat f_e(\delta_e(x))=\delta_{f(e)}(f(x))$ for every $x \in M$.
	We then define
	$$
	T_{f,e}\colon \F(M,e)\longrightarrow \F(M,e),
	\qquad
	T_{f,e}:=\Phi_{f(e),e}\circ \widehat f_e.
	$$
	Thus
	$$
	T_{f,e}(\delta_e(x))=\delta_e(f(x))-\delta_e(f(e)),
	\qquad \forall  x\in M.
	$$
	This is illustrated by the following diagram:
	\begin{equation*}
		\xymatrix{
			M \ar[r]^f  \ar[d]_{\delta_{e}} & M \ar[d]^{\delta_{f(e)}}\\
			\F(M,e) \ar[r]^{\widehat{f}_e}\ar[dr]^{T_{f,e}} & \F(M,f(e))\ar[d]^{\Phi_{f(e),e}} \\
			& \F(M,e)
		}
	\end{equation*}
	It is obvious but important to note that $f(e)=e$ if and only if $T_{f,e} = \widehat{f_e}$.
	
	\begin{remark}
		Equivalently, $T_{f,e}$ can be defined as the unique bounded linear operator on $\F(M,e)$ such that the following diagram commutes:
		\begin{equation*}
			\xymatrix{
				M \ar[r]^f  \ar[d]^{\delta_e} & M \ar[d]^{\delta_e-\delta_e(f(e))}\\
				\F(M,e) \ar@{->}[r]^{T_{f,e}} & \F(M,e)
			}
		\end{equation*}
		In particular, $\|T_{f,e}\|=\Lip(f)$. Moreover, for every $x\in M$ and every $n\in\N$, one has
		$$
		T_{f,e}^n(\delta_e(x))
		=
		\delta_e(f^n(x))-\delta_e(f^n(e)).
		$$
		This formula will be used repeatedly in the sequel.
	\end{remark}
	
	As explained above, the basepoint is constrained for operators of the form $\widehat f$. By contrast, the following proposition shows that, for the operators $T_{f,e}$, the particular choice of basepoint is immaterial up to conjugacy. Recall that two bounded linear operators $T:X\to X$ and $S:Y\to Y$ are said to be \emph{(isometrically) conjugate} if there exists an (isometric) isomorphism $\Phi:X\to Y$ such that $S\circ \Phi=\Phi\circ T$.
	
	\begin{proposition}\label{prop:conjugate}
		Let $f\colon M\to M$ be a Lipschitz map, and let $0,e\in M$. Then
		$$
		T_{f,0}\colon \F(M,0)\to \F(M,0)
		\qquad\text{and}\qquad
		T_{f,e}\colon \F(M,e)\to \F(M,e)
		$$
		are isometrically conjugate.
	\end{proposition}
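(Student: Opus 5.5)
The natural candidate for the conjugacy is the change-of-basepoint isometry $\Phi:=\Phi_{0,e}\colon \F(M,0)\to \F(M,e)$, given on generators by $\Phi(\delta_0(x))=\delta_e(x)-\delta_e(0)$. It is an onto linear isometry, as recalled before the definition of $T_{f,e}$. So it suffices to verify
$$
T_{f,e}\circ \Phi=\Phi\circ T_{f,0}.
$$
Both sides are bounded linear operators $\F(M,0)\to\F(M,e)$, and $\lspan\{\delta_0(x):x\in M\}$ is dense in $\F(M,0)$. By linearity and continuity, it is therefore enough to check the identity on each $\delta_0(x)$.

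For the left-hand side, linearity of $T_{f,e}$ and the defining formula give
$$
T_{f,e}\Phi(\delta_0(x)) = T_{f,e}\delta_e(x)-T_{f,e}\delta_e(0) = \bigl(\delta_e(f(x))-\delta_e(f(e))\bigr)-\bigl(\delta_e(f(0))-\delta_e(f(e))\bigr),
$$
which simplifies to $\delta_e(f(x))-\delta_e(f(0))$.

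For the right-hand side, we have $T_{f,0}(\delta_0(x))=\delta_0(f(x))-\delta_0(f(0))$. Applying $\Phi$ gives
$$
\bigl(\delta_e(f(x))-\delta_e(0)\bigr)-\bigl(\delta_e(f(0))-\delta_e(0)\bigr)=\delta_e(f(x))-\delta_e(f(0)).
$$
The two sides agree, which proves the conjugacy. The isometric character comes for free, since $\Phi_{0,e}$ is an onto isometry.

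There is no real obstacle here; the only points needing care are the following.
\begin{enumerate}[$(i)$]
\item One must check that $\Phi_{0,e}$ is well defined, onto and isometric. This can be taken as recalled in the paper. Alternatively, it follows by noting that $g\mapsto g-g(0)$ is an onto isometry $\Lip_0(M,e)\to\Lip_0(M,0)$, that $\Phi_{0,e}$ is its preadjoint, and that $\Phi_{e,0}$ is its inverse.
\item One must keep track of the basepoint terms $\delta_e(e)=0$ and $\delta_0(0)=0$. These cancel as shown in the two computations above.
\end{enumerate}
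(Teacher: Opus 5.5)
Your proof is correct and follows essentially the same route as the paper: you conjugate via the change-of-basepoint isometry $\Phi_{0,e}$, check $T_{f,e}\circ\Phi_{0,e}=\Phi_{0,e}\circ T_{f,0}$ on the generators $\delta_0(x)$ (both sides reduce to $\delta_e(f(x))-\delta_e(f(0))$), and conclude by linearity and density. Your justification that $\Phi_{0,e}$ is an onto isometry, as the preadjoint of $g\mapsto g-g(0)$ with inverse $\Phi_{e,0}$, is also correct; the paper simply takes this fact as known.
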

	
	\begin{proof}
		It is enough to check the identity on elements of the form $\delta_0(x)$. For $x\in M$, we have
		$$
		\begin{aligned}
			\Phi_{0,e}T_{f,0}(\delta_0(x))
			&= \Phi_{0,e}\bigl(\delta_0(f(x))-\delta_0(f(0))\bigr) \\
			&= \delta_e(f(x))-\delta_e(f(0)).
		\end{aligned}
		$$
		On the other hand,
		$$
		\begin{aligned}
			T_{f,e}\Phi_{0,e}(\delta_0(x))
			&= T_{f,e}\bigl(\delta_e(x)-\delta_e(0)\bigr) \\
			&= \bigl(\delta_e(f(x))-\delta_e(f(e))\bigr)
			-\bigl(\delta_e(f(0))-\delta_e(f(e))\bigr) \\
			&= \delta_e(f(x))-\delta_e(f(0)).
		\end{aligned}
		$$
		By linearity and density, we deduce 
		$$
		\Phi_{0,e}\circ T_{f,0}=T_{f,e}\circ \Phi_{0,e}.
		$$
		Since $\Phi_{0,e}$ is an isometric isomorphism, the two operators are isometrically conjugate.
	\end{proof}
	
	\begin{remark} Since invariant subspaces are preserved under conjugacy, Proposition~\ref{prop:conjugate} implies that, in what concerns the presence of non-trivial invariant subspaces for operators of the form $T_{f,e}$, the choice of basepoint is irrelevant. More precisely, given a Lipschitz self-map $f:M\to M$ and $e,e'\in M$, $T_{f,e}$ has a non-trivial invariant subspace if and only if $T_{f,e'}$ does.
	\end{remark}
	We are in position to pose the following companion to Question 1:
	
	\begin{question}\label{Q2}
		Does every operator of the form $T_{f,e}\colon \F(M,e)\to \F(M,e)$
		admit a non-trivial invariant subspace?
	\end{question}
	
	\medskip

	The invariant subspace problem for reflexive Banach spaces remains one of the central open problems in operator theory: it is still unknown whether every bounded operator on a separable infinite-dimensional reflexive Banach space admits a non-trivial invariant closed subspace. In the wider class of separable Banach spaces, counterexamples are known. The first one was constructed by Enflo \cite{Enflo} and later simplified by Beauzamy \cite{Beauzamy}. Read subsequently produced further examples, including examples on classical spaces such as $\ell_1$ and $c_0$; see \cite{Read1,Read2}. These constructions are famously intricate.
	
	One motivation for the present work is therefore the following question. Could some of the combinatorial complexity underlying these counterexamples be encoded in the seemingly simpler dynamics of a Lipschitz self-map of a suitably chosen metric space? At this stage, this is more a guiding intuition than a concrete strategy. Nevertheless, Lipschitz-free spaces have already proved to be a fruitful source of unexpected examples in Banach space theory. For instance, they appear in Kalton's construction of uniformly homeomorphic but non-isomorphic separable Banach spaces \cite{Kalton04}, in the examples of Albiac and Kalton of Lipschitz-isomorphic separable quasi-Banach spaces which are not linearly isomorphic \cite{AlbiacKalton}, and, more recently, in Veeorg's construction of an RNP space with a Daugavet point \cite{Veeorg}. These examples suggest that Lipschitz-free spaces provide a natural setting in which to test and refine classical problems in Banach space theory.
	
	In this paper, we do not answer Question~\ref{Q1} or Question~\ref{Q2} in full generality. Instead, we identify several situations in which operators of the form $\widehat f$ or $T_{f,e}$ necessarily admit non-trivial invariant subspaces.
	Notably, for free linearizations, we prove in Proposition~\ref{Prop-Connected-Comp-fhat} and Theorem~\ref{Thm:CompactBall} that every operator
	$
	\widehat f\colon \F(M)\to \F(M)
	$
	has a non-trivial invariant subspace whenever either
	\begin{itemize}
		\item $M$ has at least two connected components, one of which has non-empty interior; or
		\item $M$ contains a compact ball.
	\end{itemize}
	
	The situation for the operators $T_{f,e}$ is more delicate, and our results are correspondingly less sharp. For instance, Proposition~\ref{Prop: ISP2} and Theorem~\ref{Thm-Tfe-Compactball} show that every operator
	$
	T_{f,e}\colon \F(M,e)\to \F(M,e)
	$
	has a non-trivial invariant subspace whenever either
	\begin{itemize}
		\item $M$ has infinitely many connected components, one of which has non-empty interior; or
		\item $M$ is non-compact but contains a compact ball.
	\end{itemize}
	
	Thus the purpose of the paper is not to settle the invariant subspace problem for free linearizations, but rather to separate the accessible cases from the genuinely difficult ones. In this sense, the results below may be viewed as a first step, and as an invitation to further investigate the invariant subspace problem within the framework of Lipschitz-free spaces. 
	
	In the last section we turn to study dynamical properties of the operators $T_{f,e}$. In particular, we refer to cyclic, hypercyclic, (weakly) mixing and wild dynamics. Our results highlight both the similarities and differences of the behavior of the orbits of $\delta(x)$ under the action of the operators $\widehat{f}$ (i.e. when $f$ admits a fixed point) and $T_{f,e}$. Hence, this section can be regarded as a complement of \cite{ACP20,Tapia}. Our main findings are: 
	\begin{itemize}
		\item For $f\in \mathrm{Lip}(M,M)$, if $\mathrm{Orb}(x,f)$ is dense in $M$, then there is $e\in M$ such that $\delta(x)$ is cyclic vector for $T_{f,e}$.
		\item If $f$ is weakly mixing (mixing; weakly mixing and chaotic resp.) then $T_{f,e}$ is weakly mixing (mixing; weakly mixing and chaotic resp.).
		\item There is no wild operator of the form $T_{f,e}$.
	\end{itemize}

	\section{Invariant subspaces: basic definitions and reductions}
	
	Let $T\colon X\to X$ be a bounded operator on a Banach space $X$, and let $f\colon M\to M$ be a Lipschitz self-map of a metric space $M$. We shall use the following terminology.
	
	A \emph{non-trivial invariant subspace} (NTIS in short) for $T$ is a closed linear subspace $S\subset X$ such that
	$$
	\{0\}\subsetneq S\subsetneq X
	\qquad\text{and}\qquad
	T(S)\subset S.
	$$
	For $x\in M$, the orbit of $x$ under $f$ is
	$$
	\Orb(x,f):=\{f^n(x):n\in\N\cup\{0\}\}.
	$$
	For $x\in X$, we say that $x$ is a \emph{cyclic vector} for $T$ if
	$$
	\overline{\mathrm{span}}\,\Orb(x,T)=X,
	$$
	and we say that $x$ is a \emph{hypercyclic vector} for $T$ if
	$$
	\overline{\Orb(x,T)}=X.
	$$
	
	The finite-dimensional case will not be relevant for us. Indeed, over the complex field, every operator on a finite-dimensional Banach space of dimension at least two admits a non-trivial invariant subspace; over the real field, the same conclusion holds in dimension at least three. Moreover, if $X$ is non-separable, then every non-zero orbit generates a separable closed invariant subspace; hence every bounded operator on $X$ admits a non-trivial invariant subspace. Thus, in the sequel, we shall restrict our attention to infinite separable metric spaces $M$. In this case, $\F(M)$ is infinite-dimensional and separable as well.
	
	We shall also assume, without loss of generality, that $M$ is complete. Indeed, passing from $M$ to its completion does not change the associated Lipschitz-free space up to canonical isometric isomorphism.
	
	Next, it is well known, and immediate from the definitions, that $T$ has no non-trivial invariant subspace if and only if every non-zero vector of $X$ is cyclic for $T$. There is a formally analogous statement for closed invariant subsets: with the usual convention that every non-empty proper closed invariant subset is non-trivial, $f$ has no non-trivial invariant subset if and only if every orbit is dense in $M$. For our purposes, however, singleton invariant sets should be regarded as trivial. Indeed, if $f(a)=a$, then $T_{f,e}$ is conjugate to $T_{f,a}=\widehat f_a$, and the singleton $\{a\}$ only generates the zero subspace of $\F(M,a)$. Since we are interested in producing non-zero proper invariant subspaces on the operator side, we shall therefore use the following convention.
	
	\begin{definition}\label{def:Invariant-subset}
		Let $M$ be a metric space and let $f\in \Lip(M,M)$. A \emph{non-trivial closed invariant subset} for $f$ is a closed subset $A\subset M$ such that $A$ contains at least two points,
		$$
		A\subsetneq M,
		\qquad\text{and}\qquad
		f(A)\subset A.
		$$
	\end{definition}
	
	The advantage of this definition is that it does not involve any choice of basepoint. This is particularly convenient for the operators $T_{f,e}$, where the basepoint $e$ is fixed independently of the dynamics of $f$. The next proposition shows that every non-trivial closed invariant subset gives rise to a non-trivial invariant subspace of $\F(M,e)$. Before stating it, let us establish some notation simplifications to be adopted throughout the rest of this work. Whenever the choice of basepoint is clear from the context, we shall write \(\delta\) instead of \(\delta_0\) or \(\delta_e\). For clarity, we shall use the notation \(0\) for the basepoint when \(f(0)=0\) and the free linearization \(\widehat f\) is under consideration. By contrast, we shall use the notation \(e\) for the basepoint when \(f\) is not assumed to fix any distinguished point and the operator \(T_{f,e}\) is considered. 
	
	\begin{proposition}\label{Prop: ISP2}
		Let $f\in \Lip(M,M)$, let $e\in M$, and let $A\subset M$ be a non-trivial closed invariant subset for $f$. Then
		$$
		S_A:=
		\overline{\mathrm{span}}\{\delta(x)-\delta(y):x,y\in A\}
		\subset \F(M,e)
		$$
		is a non-trivial invariant subspace for $T_{f,e}$.
	\end{proposition}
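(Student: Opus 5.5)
We need to check three things: that $S_A$ is invariant, that it is non-zero, and that it is proper. Invariance and non-triviality come directly from the definitions; properness needs a separating functional.

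\emph{Invariance.} By linearity and continuity of $T_{f,e}$ it suffices to check generators. For $x,y\in A$, the defining formula gives
$$T_{f,e}(\delta(x)-\delta(y))=\bigl(\delta(f(x))-\delta(f(e))\bigr)-\bigl(\delta(f(y))-\delta(f(e))\bigr)=\delta(f(x))-\delta(f(y)).$$
Since $f(A)\subset A$, this is again a generator of $S_A$. Hence $T_{f,e}$ maps $\mathrm{span}\{\delta(x)-\delta(y):x,y\in A\}$ into itself, and by continuity it maps the closure into itself.

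\emph{Non-zero.} $A$ contains two distinct points $x\neq y$. Since $\delta$ is an isometric embedding, $\|\delta(x)-\delta(y)\|=d(x,y)>0$, so $S_A\neq\{0\}$.

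\emph{Proper.} This is the only step with real content. By Hahn--Banach, it suffices to find a non-zero $g\in\Lip_0(M,e)=\F(M,e)^*$ that vanishes on $S_A$, i.e.\ that is constant on $A$. Pick $z\in M\setminus A$; this is possible because $A\subsetneq M$. Since $A$ is closed, $r:=d(z,A)>0$. Set $h(p):=d(p,A)$, which is $1$-Lipschitz, vanishes on $A$, and satisfies $h(z)=r>0$. Put $g:=h-h(e)$, so that $g(e)=0$ and $g\in\Lip_0(M,e)$. Then $g$ is constant on $A$, so $\langle \delta(x)-\delta(y),g\rangle=g(x)-g(y)=0$ for all $x,y\in A$, and therefore $g$ annihilates $S_A$.

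It remains to see that $g\neq 0$ as an element of $\Lip_0(M,e)$, i.e.\ that $g$ is non-constant. Indeed, $g$ takes the value $-h(e)$ on $A$ and the value $r-h(e)$ at $z$, and these differ because $r>0$. Since $\F(M,e)^*=\Lip_0(M,e)$, a closed subspace annihilated by a non-zero functional cannot be the whole space, so $S_A\neq\F(M,e)$.

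Nothing subtle arises; the only point requiring care is the one just handled, namely using the closedness of $A$ so that $d(z,A)>0$ and hence $g$ is non-zero.
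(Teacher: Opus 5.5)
Your proof is correct and follows the paper's argument essentially step for step: invariance is checked on the generators $\delta(x)-\delta(y)$, non-triviality comes from $\delta(a)-\delta(b)\neq 0$, and properness comes from the annihilating functional $d(\cdot,A)-d(e,A)\in\Lip_0(M,e)$, which is constant on $A$ and takes a different value at a point outside the closed set $A$. One small remark: Hahn--Banach is not actually needed, since exhibiting a non-zero functional that vanishes on $S_A$ already shows directly that $S_A\neq\F(M,e)$.
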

	
	\begin{proof}
		For every $x,y\in A$, one has
		$$
		T_{f,e}(\delta(x)-\delta(y))
		=
		\delta(f(x))-\delta(f(y)).
		$$
		Since $f(A)\subset A$, it follows that
		$$
		T_{f,e}\bigl(\mathrm{span}\{\delta(x)-\delta(y):x,y\in A\}\bigr)
		\subset
		\mathrm{span}\{\delta(x)-\delta(y):x,y\in A\}.
		$$
		By density and continuity, we obtain
		$$
		T_{f,e}(S_A)\subset S_A.
		$$
		
		It remains to check that $S_A$ is non-trivial. Since $A$ contains at least two points, we may choose $a,b\in A$ with $a\neq b$. Then $\delta(a)-\delta(b)\neq 0$, and therefore $S_A\neq \{0\}$.
		
		Finally, since $A$ is a proper closed subset of $M$, we may choose $x\in M\setminus A$. Fix $a\in A$ and define
		$$
		\varphi(z):=d(z,A)-d(e,A),\qquad \forall z\in M.
		$$
		Then $\varphi\in \Lip_0(M,e)$. Moreover, $\varphi$ is constant on $A$, and hence
		$$
		S_A\subset \ker \varphi.
		$$
		On the other hand,
		$$
		\langle \delta(x)-\delta(a),\varphi\rangle
		=
		d(x,A)>0.
		$$
		Thus $\ker\varphi$ is a proper closed subspace of $\F(M,e)$ containing $S_A$, and consequently $S_A\neq \F(M,e)$.
	\end{proof}
	
	Let us note that if $e\in A$, then $S_A$ takes the simpler form
	$$
	S_A=
	\overline{\mathrm{span}}\{\delta(x):x\in A\} =:\F_M(A).
	$$
	In this case, $S_A$ is naturally identified with the Lipschitz-free space over $A$ with basepoint $e$.
	
	It is natural to ask whether the converse of Proposition~\ref{Prop: ISP2} holds. It does not: There are Lipschitz self-maps $f\in \Lip_0(M,M)$ which have no non-trivial closed invariant subset, while their free linearization $\widehat f\colon \F(M)\to \F(M)$ does have a non-trivial invariant subspace. We now present two examples showing that the converse can fail in a rather strong way.
	
	\begin{example}\label{example:TorusWith0}
		
		Let $M=\mathbb T :=\{e^{2\pi i t}:t\in\R\}$,
		endowed with the metric inherited from the Euclidean distance on $\C$. We take $1$ as the distinguished point of $M$. Let $\alpha\in \R\setminus\Q$, and define
		$$
		f(z)=e^{2\pi i\alpha}z,\qquad \forall z\in \mathbb T.
		$$
		Then $f\in \Lip(M,M)$. Since the irrational rotation of the circle is minimal, $f$ has no non-trivial closed invariant subset; see, for instance, \cite[Exercise~1.2.9]{GEPM}. Let
		\begin{align*}
			E
			& := \overline{\mathrm{span}}\{T_{f,1}^n (\delta(-1)) :n\in \N \cup \{0\} \}\\
			& = \overline{\mathrm{span}}\{ \delta(-e^{2\pi i \alpha n}) - \delta(e^{2\pi i \alpha n})     :n\in \N \cup \{0\} \} \\
			&= \overline{\mathrm{span}}\{ \delta(-e^{2\pi i t}) - \delta(e^{2\pi i t})     :t\in \mathbb R \}
		\end{align*}
		be the closed linear span of the orbit of $\delta(-1)$ under $T_{f,1}$. Then $E\neq \{0\}$ and noticing that $E \subset \ker(\varphi)$
		where $\varphi$ is the non-zero element of $\Lip_0(M)$ defined by $\varphi(e^{2\pi i t}) = \sin(4\pi t)$, it is clear that $E$ is a non-trivial invariant subspace for $T_{f,1}$.

	\end{example}
	
	
	
	Another illustration of this phenomenon comes from linear operators, using for instance Read's examples of operators with no non-trivial invariant subsets (\cite{Read}).
	
	\begin{proposition}
		Let $X$ be a non-zero Banach space. If $T\in \Lip_0(X,X)$ is odd, that is $T(-x)=-T(x)$ for every $x\in X$, then $\widehat T\colon \F(X)\longrightarrow \F(X)$ has a non-trivial invariant subspace.
	\end{proposition}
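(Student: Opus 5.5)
Mimicking Example~\ref{example:TorusWith0}, I will use the symmetry $x\mapsto -x$ of $X$. Let $\sigma\colon X\to X$, $\sigma(x)=-x$. It is an isometry fixing $0$, so $\widehat\sigma$ is an isometric involution of $\F(X)$. Oddness of $T$ reads $T\circ\sigma=\sigma\circ T$. Linearization is functorial, so $\widehat T\widehat\sigma=\widehat\sigma\widehat T$ on each $\delta(x)$, and by linearity and density on all of $\F(X)$. Hence the two spectral subspaces of the involution $\widehat\sigma$,
$$
E_+:=\ker(I-\widehat\sigma),\qquad E_-:=\ker(I+\widehat\sigma),
$$
are closed and $\widehat T$-invariant, since a commuting operator preserves kernels. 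It remains to show that one of them is non-trivial. They satisfy $\F(X)=E_+\oplus E_-$ via $P_\pm=\tfrac12(I\pm\widehat\sigma)$. So it suffices to prove that both $E_+$ and $E_-$ are non-zero: then each is automatically proper, as the other is non-zero.

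For $E_-$: pick $x\neq0$. Then $\delta(x)-\delta(-x)\in E_-$, and it is non-zero because $\|\delta(x)-\delta(-x)\|=2\|x\|>0$.

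For $E_+$: the vector $\delta(x)+\delta(-x)$ lies in $E_+$. To see it is non-zero, I test it against the element $\varphi(z)=\|z\|$ of $\Lip_0(X)$, which has Lipschitz constant $1$ and vanishes at $0$. This gives $\langle\delta(x)+\delta(-x),\varphi\rangle=2\|x\|\neq0$.

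Thus $E_-$ (or equally $E_+$) is a non-trivial invariant subspace. There is no real obstacle here. The only points to verify carefully are the commutation $\widehat T\widehat\sigma=\widehat\sigma\widehat T$, which follows from uniqueness of linearizations, and the existence of the even test function $\|\cdot\|$, which makes $E_+$ non-zero. Both are immediate.
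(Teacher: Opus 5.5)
Your proof is correct and is essentially the paper's argument. The paper uses $S=\overline{\mathrm{span}}\{\delta(x)-\delta(-x):x\in X\}$, which coincides with your $E_-=\ker(I+\widehat\sigma)$, and gets properness from $S\subset\ker(I+\widehat\sigma)$ with $I+\widehat\sigma\neq 0$; that is exactly your check that $\delta(x)+\delta(-x)\neq 0$, which you make explicit with the test function $\|\cdot\|$, alongside the commutation $\widehat T\widehat\sigma=\widehat\sigma\widehat T$.
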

	
	\begin{proof}
		Define $S:=
		\overline{\mathrm{span}}
		\{\delta(x)-\delta(-x):x\in X\}
		\subset \F(X)$. Since $T$ is odd, we have
		$$
		\widehat T(\delta(x)-\delta(-x))
		=
		\delta(Tx)-\delta(-Tx)\in S,
		$$
		and therefore $\widehat T(S)\subset S$. We now check that $S$ is non-trivial. If $x\neq 0$, then $\delta(x)-\delta(-x)\neq 0$. Thus $S\neq \{0\}$. It remains to see that $S\neq \F(X)$. Let
		$$
		U:=\widehat{(-\mathrm{Id}_X)}\colon \F(X)\to \F(X).
		$$
		Then $U(\delta(x))=\delta(-x)$, and hence
		$$
		(I+U)(\delta(x)-\delta(-x))=0.
		$$
		Thus $S\subset \ker(I+U)$. On the other hand, $I+U$ is clearly non-zero. Hence $\ker(I+U)$ is a proper closed subspace of $\F(X)$, and consequently $S\neq \F(X)$. Therefore $S$ is a non-trivial invariant subspace for $\widehat T$.
	\end{proof}
	
	We conclude this section with a collection of elementary situations in which an operator $T_{f,e}$ admits a non-trivial invariant subspace. The following statement is meant to isolate several cases that will be excluded from the more delicate parts of the paper.
	
	\begin{proposition}\label{Prop: ISP}
		Let $M$ be an infinite separable metric space, let $f\in \Lip(M,M)$ and let $e\in M$. If one of the following assumptions is satisfied, then $T_{f,e}$ has a non-trivial invariant subspace:
		\begin{enumerate}[$(i)$]
			\item $f(M)$ is not dense in $M$;
			\item $f$ has a non-trivial and non-dense orbit;
			\item there exist $a\in M$ and $r>0$ such that $f(a)=a$, $\Lip\bigl(f|_{\overline B(a,r)}\bigr)\leq 1$,
			and $\{a\}\subsetneq \overline B(a,r)\subsetneq M$;
			\item there exist $a\in M$ and $r>0$ such that $f(a)=a$, $M\setminus B(a,r)\neq \emptyset$, and $d(f(y),a)\geq d(y,a)$ for all $y\in M\setminus B(a,r)$;
			\item $T_{f,e}$ is not injective. In particular, this holds if $f$ is not injective.
		\end{enumerate}
	\end{proposition}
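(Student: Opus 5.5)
The plan is to reduce each of $(i)$–$(iv)$ to Proposition~\ref{Prop: ISP2}: in each case we exhibit a non-trivial closed invariant subset $A$ in the sense of Definition~\ref{def:Invariant-subset}. Case $(v)$ is handled directly on the operator side.

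For $(i)$, take $A:=\overline{f(M)}$. It is closed, and $f(A)\subset \overline{f(f(M))}\subset A$ by continuity. It is proper because $f(M)$ is not dense. If $A$ has at least two points, Proposition~\ref{Prop: ISP2} applies. Otherwise $f$ is constant, say equal to $c$. Then $T_{f,e}(\delta(x))=\delta(c)-\delta(c)=0$, so $T_{f,e}=0$, and any one-dimensional subspace is a non-trivial invariant subspace (recall that $\F(M)$ is infinite-dimensional).

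For $(ii)$, interpret ``non-trivial orbit'' as an orbit containing at least two points. Take $A:=\overline{\Orb(x,f)}$. It is closed, invariant by continuity, proper by non-density, and contains at least two points.

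For $(iii)$, take $A:=\overline B(a,r)$. For $y\in A$ we have $d(f(y),a)=d(f(y),f(a))\le d(y,a)\le r$, so $f(A)\subset A$. The hypothesis $\{a\}\subsetneq A\subsetneq M$ gives exactly the required non-triviality, and $A$ is closed.

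For $(iv)$, take $A:=M\setminus B(a,r)=\{y: d(y,a)\ge r\}$. It is closed and non-empty, and the expansion hypothesis gives $d(f(y),a)\ge d(y,a)\ge r$, so $f(A)\subset A$. It is proper since $a\notin A$. The delicate point is ensuring $A$ has at least two points. If $A=\{b\}$, then $f(b)=b$. In that case I would instead use $A':=\{a,b\}$: it is closed, has two points, is invariant since both points are fixed, and is proper because $M$ is infinite. Either way, Proposition~\ref{Prop: ISP2} applies.

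For $(v)$, the kernel of $T_{f,e}$ is closed and invariant. It is non-zero by assumption and proper unless $T_{f,e}=0$. If $T_{f,e}=0$, any line is a non-trivial invariant subspace, as in $(i)$. For the ``in particular'' claim, suppose $f(x)=f(y)$ with $x\neq y$. Then
\[
T_{f,e}(\delta(x)-\delta(y))=\delta(f(x))-\delta(f(y))=0,
\]
while $\delta(x)-\delta(y)\neq 0$, so $T_{f,e}$ is not injective.

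The only genuinely subtle steps are the degenerate sub-cases: constant $f$ in $(i)$, and $A$ being a singleton in $(iv)$. These are where the two-point requirement of Definition~\ref{def:Invariant-subset} must be checked by hand, and I expect them to be the main obstacle.
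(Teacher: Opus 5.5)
Your proposal is correct and follows the paper's proof essentially step by step. You use the same invariant sets $\overline{f(M)}$, $\overline{\Orb(x,f)}$, $\overline B(a,r)$ and $M\setminus B(a,r)$ (with the fallback $\{a,b\}$ when the latter is a singleton), each fed into Proposition~\ref{Prop: ISP2}, and the same kernel argument for $(v)$, with the degenerate case $T_{f,e}=0$ (constant $f$) handled identically.
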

	
	\begin{proof}
		We treat each case separately.
		\smallskip
		
		$(i)$ Put $A:=\overline{f(M)}$. Then $A$ is a proper closed subset of $M$. Moreover, $f(A)\subset A$. If $A$ contains at least two points, then Proposition~\ref{Prop: ISP2} applies and gives a non-trivial invariant subspace for $T_{f,e}$. If $A$ is a singleton, then $f$ is constant, and hence $T_{f,e}=0$. Since $M$ is infinite, $\F(M,e)$ is infinite-dimensional, and any one-dimensional subspace of $\F(M,e)$ is then a non-trivial invariant subspace.
		\smallskip
		
		$(ii)$ Let $x\in M$ be such that $A:=\overline{\Orb(x,f)}$ is a proper subset of $M$ and contains at least two points. Then $A$ is closed and satisfies $f(A)\subset A$. Hence Proposition~\ref{Prop: ISP2} shows that $T_{f,e}$ has a non-trivial invariant subspace.
		\smallskip
		
		$(iii)$ Let $A:=\overline B(a,r)$. For every $y\in A$, we have
		$$
		d(f(y),a)
		=
		d(f(y),f(a))
		\leq d(y,a)
		\leq r,
		$$
		and therefore $f(A)\subset A$. By assumption, $A$ is a proper closed subset of $M$ containing at least two points. Thus Proposition~\ref{Prop: ISP2} applies.
		\smallskip
		
		$(iv)$ Let
		$$
		A:=M\setminus B(a,r).
		$$
		Then $A$ is a non-empty proper closed subset of $M$. Moreover, $f(A)\subset A$. Indeed, if $y\in A$, then $d(y,a)\geq r$, so
		$$
		d(f(y),a)\geq d(y,a)\geq r.
		$$
		Thus $f(y)\in A$. If $A$ contains at least two points, Proposition~\ref{Prop: ISP2} applies. Otherwise, write $A=\{b\}$. Then $f(b)=b$, and hence $\{a,b\}$ is a proper closed subset of $M$ containing at least two points and satisfying
		$f(\{a,b\})\subset \{a,b\}$. Again, Proposition~\ref{Prop: ISP2} applies.
		\smallskip
		
		$(v)$ If $T_{f,e}$ is not injective and $T_{f,e}\neq 0$, then $\ker T_{f,e}$
		is a non-zero proper closed invariant subspace. If $T_{f,e}=0$, then every subspace of $\F(M,e)$ is invariant, and since $M$ is infinite, $\F(M,e)$ admits non-trivial closed subspaces.
		
		If $f$ is not injective, choose $x\neq y$ such that $f(x)=f(y)$. Then
		$$
		T_{f,e}(\delta(x)-\delta(y))
		=
		\delta(f(x))-\delta(f(y))
		=
		0,
		$$
		while $\delta(x)-\delta(y)\neq 0$. Hence $T_{f,e}$ is not injective.
	\end{proof}

	\section{Metric spaces ensuring NTIS for \texorpdfstring{$\widehat{f}$}{f hat}}
	
	In this section, $0$ will be the distinguished point of $M$, and we shall consider maps $f\in \Lip_0(M,M)$. The goal is to provide some partial positive answers to the invariant subspace problem for $\widehat f$. More precisely, we show that, under certain assumptions on the metric space $M$, every operator of the form $\widehat f$ admits a non-trivial invariant subspace.
	
	Let us point out that, as a map on $\F(M)$, $\widehat f$ always leaves the set
	$$
	\delta(M)=\{\delta(x):x\in M\}
	$$
	invariant. Of course, this set is not a linear subspace. The question is therefore whether this metric invariance can be upgraded to the existence of a non-trivial closed invariant linear subspace. We begin with a simple lemma which will be used repeatedly.
	
	\begin{lemma} \label{isolated}
		Let $M$ be an infinite pointed metric space with an isolated point $\overline{x}\in M$.
		Then, for any $f\in \Lip_0(M,M)$, $\widehat{f}$ has a non-trivial invariant subspace.
	\end{lemma}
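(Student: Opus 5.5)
The plan is to split according to whether the isolated point $\overline{x}$ is the basepoint. In each case I either produce a non-trivial closed invariant subset and apply Proposition~\ref{Prop: ISP2}, or write down an explicit invariant subspace together with a functional that annihilates it. Throughout I use that $f(0)=0$ gives $T_{f,0}=\widehat f$, so Propositions~\ref{Prop: ISP2} and~\ref{Prop: ISP} apply directly to $\widehat f$.

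\emph{Case $\overline{x}=0$.} Since $0$ is isolated, $A:=M\setminus\{0\}$ is closed. It is proper and infinite, because $M$ is infinite.
\begin{enumerate}[$(a)$]
\item If some $x\neq 0$ satisfies $f(x)=0=f(0)$, then $f$ is not injective, and Proposition~\ref{Prop: ISP}~$(v)$ gives a non-trivial invariant subspace.
\item Otherwise $f(A)\subset A$, so $A$ is a non-trivial closed invariant subset, and Proposition~\ref{Prop: ISP2} applies.
\end{enumerate}

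\emph{Case $\overline{x}\neq 0$.} First suppose the orbit $\Orb(\overline{x},f)$ is finite. Then $A:=\Orb(\overline{x},f)\cup\{0\}$ is a finite set, hence closed and proper in the infinite space $M$. It contains the two points $0$ and $\overline{x}$, and it is $f$-invariant because $f(0)=0$. Proposition~\ref{Prop: ISP2} again applies.

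Now suppose the orbit is infinite, so the points $f^n(\overline{x})$ are pairwise distinct. In particular $f^n(\overline{x})\neq \overline{x}$ for every $n\geq 1$. Define
$$
S:=\overline{\mathrm{span}}\{\delta(f^n(\overline{x})):n\geq 1\}.
$$
\begin{enumerate}[$(a)$]
\item $S$ is closed by definition, and it is invariant because $\widehat f(\delta(f^n(\overline{x})))=\delta(f^{n+1}(\overline{x}))$.
\item $S\neq\{0\}$: the points $f^n(\overline{x})$, $n\geq 1$, are distinct, so at most one of them equals $0$. Hence some $\delta(f^n(\overline{x}))\neq 0$.
\item $S\neq\F(M)$: let $r:=d(\overline{x},M\setminus\{\overline{x}\})>0$, and set $g(\overline{x})=r$ and $g=0$ elsewhere. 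Then $g(0)=0$, and $g$ is $1$-Lipschitz: for $y\neq\overline{x}$ we have $|g(\overline{x})-g(y)|=r\leq d(\overline{x},y)$, and $g$ is constant off $\overline{x}$. So $g\in\Lip_0(M)$ vanishes on every $\delta(f^n(\overline{x}))$, $n\geq1$, hence on $S$, while $\langle\delta(\overline{x}),g\rangle=r\neq 0$. Thus $S\subset\ker g\subsetneq\F(M)$.
\end{enumerate}

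The only delicate point is ensuring $\overline{x}$ does not reappear in its own forward orbit, since otherwise $g$ would not annihilate $S$. Splitting off the finite-orbit case removes exactly this obstruction, because a finite orbit already yields a proper finite invariant set.
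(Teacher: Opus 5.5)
Your proof is correct and is essentially the paper's argument. The case $\overline{x}=0$ is the same. In the infinite-orbit case, your subspace $S$ is the one generated by the closed invariant set $\overline{\Orb(f(\overline{x}),f)}\cup\{0\}$, which misses $\overline{x}$ because $\overline{x}$ is isolated; you certify properness with an explicit functional (equal to $r$ at $\overline{x}$ and $0$ elsewhere) rather than through Proposition~\ref{Prop: ISP2}. The one genuine variation is that you handle every finite orbit at once (fixed, periodic or preperiodic) through $\Orb(\overline{x},f)\cup\{0\}$, using $f(0)=0$ to guarantee two points. This lets you skip the paper's preliminary reduction to injective $f$, which the paper needs for its fixed-point case via $M\setminus\{\overline{x}\}$ and for the two-point check on $\overline{\Orb(f(\overline{x}),f)}$.
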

	
	\begin{proof}
		By Proposition~\ref{Prop: ISP}, we may assume that $f$ is injective.
		
		If $\overline{x}=0$, then $M\setminus \{0\}$ is a closed subset of $M$. Moreover, since $f$ is injective and $f(0)=0$, we have $f(M\setminus\{0\})\subset M\setminus\{0\}$. Thus $M\setminus\{0\}$ is a non-trivial closed invariant subset for $f$, and Proposition~\ref{Prop: ISP2} applies.
		
		Assume now that $\overline{x}\neq 0$. If $\overline{x}$ is not a periodic point of $f$, then
		$$
		A:=\overline{\Orb(f(\overline{x}),f)}
		$$
		is a closed invariant subset which does not contain $\overline{x}$. Since $\overline{x}$ is isolated, this implies that $A\neq M$. Moreover, $A$ contains at least two points. Indeed, otherwise $f(\overline{x})$ would be a fixed point of $f$, contradicting the injectivity of $f$. Hence $A$ is a non-trivial closed invariant subset, and Proposition~\ref{Prop: ISP2} applies.
		
		If $\overline{x}$ is a periodic point of period at least two, then its periodic orbit is a non-trivial closed invariant subset of $M$, and again Proposition~\ref{Prop: ISP2} applies. Finally, if $\overline{x}$ is a fixed point, then $M\setminus\{\overline{x}\}$ is closed and invariant under $f$, by injectivity. Since $M$ is infinite, this is a non-trivial closed invariant subset, and the conclusion follows once more from Proposition~\ref{Prop: ISP2}.
	\end{proof}
	
	\begin{proposition} \label{Prop-Connected-Comp-fhat}
		Let $(M,d)$ be an infinite complete pointed metric space. Assume that $M$ has at least two connected components, and that one of them has non-empty interior.
		Then, for any $f\in \Lip_0(M,M)$, $\widehat{f}$ has a non-trivial invariant subspace.
	\end{proposition}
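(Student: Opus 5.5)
The plan is to reduce to the construction of a non-trivial closed invariant subset and then apply Proposition~\ref{Prop: ISP2}; note that $T_{f,0}=\widehat f$ since $f(0)=0$. By Proposition~\ref{Prop: ISP}$(v)$ we may assume from the outset that $f$ is injective. Throughout, we use two elementary facts: connected components of $M$ are closed, and the image under $f$ of a connected set is connected. Hence $f$ maps each component of $M$ into a single component.

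First I would deal with the component $C_0$ of the basepoint. Since $f(C_0)$ is connected and contains $f(0)=0$, we get $f(C_0)\subset C_0$. As $M$ has at least two components, $C_0\subsetneq M$, and $C_0$ is closed. So if $C_0$ has at least two points, it is a non-trivial closed invariant subset and we are done. We may therefore assume $C_0=\{0\}$.

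Next, let $C$ be a component with non-empty interior $U$. If $C$ is a singleton, its point is isolated and Lemma~\ref{isolated} applies. So assume $C$ has at least two points; in particular $C\neq C_0$, so $0\notin C$. For $n\geq 0$ denote by $C_n$ the component containing $f^n(C)$. Injectivity of $f$ together with $f(0)=0$ and $0\notin C$ gives $0\notin f^n(C)$, so $C_n\neq C_0$ for all $n$. I would then split into two cases.

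\emph{Case 1: $f^n(C)\subset C$ for some $n\geq 1$,} i.e.\ $C_n=C$. Take $A:=C_0'\cup\dots\cup C_{n-1}'$ with $C_k':=C_k$. This is a finite union of closed sets, hence closed. It is invariant because $f(C_k)\subset C_{k+1}$ and $C_n=C$. It contains $C$, so it has at least two points, and it misses $0$, so it is proper.

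\emph{Case 2: $f^n(C)\cap C=\emptyset$ for every $n\geq 1$.} Put $A:=\overline{\bigcup_{n\geq1}f^n(C)}$. This set is closed and invariant by continuity of $f$. It has at least two points, since $f(C)$ has at least two points by injectivity. The union $\bigcup_{n\ge1}f^n(C)$ is disjoint from the open set $U\subset C$, so its closure also misses $U$, and hence $A\neq M$.

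In both cases Proposition~\ref{Prop: ISP2} yields the non-trivial invariant subspace. The only delicate point I expect is Case 2: we need the disjointness from an \emph{open} set in order to pass to the closure. This is exactly where the non-empty interior hypothesis is used.
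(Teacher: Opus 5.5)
Your proof is correct, and it takes a somewhat different route from the paper's. Both proofs start with the same reductions: $f$ injective and $C_0=\{0\}$. From there the two arguments diverge.

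\textbf{The paper's route.} It removes all isolated points of $M$ via Lemma~\ref{isolated}, and deduces that $M$ then has infinitely many components. It then argues by contradiction. If the orbit of some $x\in C_a$ were dense, it would re-enter $\mathrm{int}(C_a)$ at a positive time $n$, forcing $f^n(C_a)\subset C_a$. This traps $\Orb(x,f)$ in finitely many components, whose union is closed and proper precisely because there are infinitely many components. Proposition~\ref{Prop: ISP}$(ii)$ then concludes.

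\textbf{Your route.} You split directly on whether $C$ ever returns into itself, and you exhibit the invariant set in each case. If it returns, the finite cycle of components $C\cup C_1\cup\dots\cup C_{n-1}$ is the invariant set; it is proper because injectivity and $f(0)=0$ keep every $f^n(C)$ away from the basepoint. If it never returns, the invariant set is the closure of $\bigcup_{n\ge1}f^n(C)$; it is proper because it misses the open set $\mathrm{int}(C)$.

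\textbf{What each buys.} Your version is constructive, and it only needs the single component $C$ to be non-degenerate rather than $M$ to be free of isolated points. It replaces the ``infinitely many components'' count by the observation that the forward images of $C$ avoid $0$. It also never has to check that the orbit fed into Proposition~\ref{Prop: ISP}$(ii)$ is non-trivial, a point the paper leaves implicit (it matters if the chosen $x$ is fixed, though then $C_a$ is itself invariant). The paper's version is shorter because it recycles Proposition~\ref{Prop: ISP}$(ii)$. Its orbit-return argument is also the one reused for $T_{f,e}$ in Proposition~\ref{Prop:InfiniteConnectedComponent}. There no basepoint is fixed, so properness must come from the infinitely-many-components hypothesis in either approach.

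A minor presentational point: using $C_0$ both for the basepoint component and for the first member of the cycle, patched with $C_k'$, is confusing. Index the cycle by a different letter.
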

	
	\begin{proof}
		Let $f\in \Lip_0(M,M)$. By Proposition~\ref{Prop: ISP} and Lemma~\ref{isolated}, we may assume that $f$ is injective and that $M$ has no isolated points.
		
		Let $(C_i)_{i\in I}$ be the family of connected components of $M$, and let $C_0$ denote the connected component containing $0$. Since $f(C_0)$ is connected and contains $0$, we have $f(C_0)\subset C_0$. Therefore, if $C_0\neq \{0\}$, then $C_0$ is a non-trivial closed invariant subset of $M$, and Proposition~\ref{Prop: ISP2} yields the desired conclusion.
		
		We may therefore assume that $C_0=\{0\}$. Since $0$ is not isolated, this implies that $M$ has infinitely many connected components. Indeed, if there were only finitely many connected components, then $C_0$ would be clopen, and hence $0$ would be isolated.
		\smallskip
		
		Let $a\in I$ be such that $C_a$ has non-empty interior. Since $M$ has no isolated points, $C_a$ is not a singleton. Fix $x\in C_a$. We shall prove that $\Orb(x,f)$ is not dense in $M$. By Proposition~\ref{Prop: ISP}, this will give the desired conclusion.
		
		Suppose, towards a contradiction, that $\Orb(x,f)$ is dense in $M$. Since $C_a$ has non-empty interior, there exists $n\in \N$ such that $f^n(x)\in C_a$. Since $f$ is continuous, it maps a connected component into a connected component, and as it returns to $C_a$, it visits only finitely $C_i$'s. Thus there exist $i_1,\ldots,i_m\in I$ with $m\leq n$ such that
		$$
		\Orb(x,f)\subset \bigcup_{k=1}^m C_{i_k}.
		$$
		Since each connected component is closed, the finite union $\bigcup_{k=1}^m C_{i_k}$ is closed. It is also a proper subset of $M$, because $M$ has infinitely many connected components. Hence
		$$
		\overline{\Orb(x,f)}\subset \bigcup_{k=1}^m C_{i_k}\neq M,
		$$
		contradicting the density of $\Orb(x,f)$.
	\end{proof}
	
	The next definition comes from \cite{G}.
	
	\begin{definition}
		Let $f : M \to M$ be a continuous map. We say that $x\in M$ is almost periodic for $f$ if, for every $\delta>0$, there exists $N\in \N$ such that for every $n\in \N$, there is $k\in\{n+1,\ldots,n+N\}$ satisfying
		$$
		d(f^k(x),x)\leq \delta.
		$$
	\end{definition}
	
	The following lemma and its proof are inspired by \cite[Lemma 4]{G}.
	
	\begin{lemma} \label{lemma:AlmostP}
		Let $M$ be a metric space and let $f : M \to M$ be continuous. Suppose that there are $\overline{x}\in M$ and $\varepsilon>0$ such that $\overline{x}$ is not isolated, $\overline{B}(\overline{x},\varepsilon)$ is compact, and every point in $\overline{B}(\overline{x},\varepsilon)$ has a dense orbit.
		Then $\overline{x}$ is an almost periodic point of $f$.
	\end{lemma}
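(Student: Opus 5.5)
Fix $\delta>0$, and shrink it if needed so that $\delta<\varepsilon$. Put $K:=\overline{B}(\overline{x},\varepsilon)$ (compact) and $U:=B(\overline{x},\delta)$ (open, non-empty). The plan is a compactness argument that yields a uniform return time. Every $y\in K$ has a dense orbit. Since $\overline{x}$ is not isolated, $M$ has no isolated point near $\overline{x}$, so $U$ contains infinitely many points. Hence the orbit of $y$ meets $U$ at infinitely many times, and in particular at some time $k\geq 1$, i.e.\ $f^{k}(y)\in U$. Here one must be careful: a dense orbit visits the open set $U$ infinitely often only if $M$ has no isolated points inside $U$. This is true near $\overline{x}$, after possibly shrinking $\delta$ further so that the ball avoids isolated points. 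I expect this to need some care: I would argue that $U\setminus\{$finitely many orbit points$\}$ is still a non-empty open set, because $\overline{x}$ is not isolated, so a point of $U$ distinct from the first few orbit points still has a neighbourhood inside $U$ avoiding them.

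By continuity of $f^{k}$, each $y\in K$ has an open neighbourhood $V_y$ with $f^{k_y}(V_y)\subset U$ for some $k_y\geq1$. By compactness, finitely many of these sets $V_{y_1},\dots,V_{y_p}$ cover $K$. Set $N_0:=\max_j k_{y_j}$. Then every $z\in K$ satisfies $f^{k}(z)\in U\subset K$ for some $1\leq k\leq N_0$.

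Iterating this gives the desired syndeticity. Starting from $z_0=\overline{x}$, we get return times $t_1<t_2<\dots$ with gaps at most $N_0$ and $f^{t_i}(\overline{x})\in U$. However, this only shows returns to $U=B(\overline{x},\delta)$ along the orbit of $\overline{x}$. The iteration also needs $f^{t_i}(\overline{x})\in K$, which holds because $\delta<\varepsilon$. So for every $n$ there is $k\in\{n+1,\dots,n+N_0\}$ with $d(f^k(\overline{x}),\overline{x})<\delta$, and we take $N:=N_0$.

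The main obstacle is the first step: justifying a return time $k\geq1$, rather than $k=0$, for every $y\in K$, uniformly enough to apply compactness. Points $y$ near the boundary of $K$ are not themselves in $U$, so density of the orbit already forces a positive time. For $y\in U$ one needs the "infinitely many visits" observation above, which relies on $\overline{x}$ (and hence points of $U$) being non-isolated. If needed, I would replace $U$ by a smaller open ball $W\subset U$ around a non-isolated point and note that the orbit of any point enters $W$ at some time $k\geq 1$ unless the orbit has at most one point in $W$. The latter is impossible, since $W$ is infinite and the orbit is dense.
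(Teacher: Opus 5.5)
Your proof is correct, but it is organized differently from the paper's.

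The paper argues by contradiction. Assuming some $\delta<\varepsilon$ fails, it takes for each $N$ the last visit $r_N\le n_N$ of the orbit of $\overline{x}$ to $B(\overline{x},\delta)$, so that the next $N$ iterates stay at distance $\ge\delta$. A cluster point $\ell\in\overline{B}(\overline{x},\varepsilon)$ of $(f^{r_N}(\overline{x}))_N$ then satisfies $d(f^j(\ell),\overline{x})\ge\delta$ for every $j\ge1$, by continuity of each fixed iterate $f^j$. This contradicts the density of the orbit of $\ell$.

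You argue directly instead. The open sets $(f^k)^{-1}(U)$, $k\ge1$, cover $K$, and a finite subcover gives a uniform bound $N_0$ on the first positive return time to $U$ for every point of $K$. Since $U\subset K$, you can then iterate along the orbit of $\overline{x}$. This is the classical argument that points of compact minimal systems are uniformly recurrent. It gives slightly more than the paper's proof: an explicit $N$, valid simultaneously for all starting points in $K$. The paper's limit-point argument avoids the iteration and the counting step. Both proofs use the non-isolatedness of $\overline{x}$ at the same place: to guarantee that a dense orbit starting inside $U$ re-enters $U$ at a \emph{positive} time.

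Your side remarks on that point are muddled and should be dropped:
\begin{itemize}
\item Non-isolatedness of $\overline{x}$ alone does not let you shrink $\delta$ so that the ball contains no isolated points, since isolated points can accumulate at a non-isolated point.
\item A dense orbit visits an open set infinitely often as soon as that set is infinite, whether or not it contains isolated points.
\end{itemize}
Neither is needed. For $y\in U$, the set $U\setminus\{y\}$ is non-empty and open because $\overline{x}$ is not isolated. The dense orbit of $y$ therefore meets it at some time $k$, and necessarily $k\ge1$. For $y\in K\setminus U$, any visit to $U$ already happens at a positive time.
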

	
	\begin{proof}
		Suppose, towards a contradiction, that $\overline{x}$ is not almost periodic for $f$. Then there exists $\delta>0$ such that, for every $N\in \N$, there is $n_N\in \N$ such that
		$$
		d(f^k(\overline{x}),\overline{x})>\delta,
		\qquad \forall k\in\{n_N+1,\ldots,n_N+N\}.
		$$
		Replacing $\delta$ by a smaller positive number if necessary, we may assume that $\delta<\varepsilon$.
		
		For each $N\in \N$, let $r_N$ be the largest integer $r\leq n_N$ such that
		$$
		f^r(\overline{x})\in B(\overline{x},\delta).
		$$
		Such an integer exists, since $0\leq n_N$ and $f^0(\overline{x})=\overline{x}\in B(\overline{x},\delta)$. By the maximality of $r_N$ and by the choice of $n_N$, we have
		$$
		d(f^{r_N+j}(\overline{x}),\overline{x})\geq \delta,
		\qquad \forall j\in\{1,\ldots,N\}.
		$$
		Moreover,
		$$
		f^{r_N}(\overline{x})\in \overline{B}(\overline{x},\delta)\subset \overline{B}(\overline{x},\varepsilon).
		$$
		
		By compactness of $\overline{B}(\overline{x},\varepsilon)$, passing to a subsequence if necessary, we may assume that
		$$
		f^{r_N}(\overline{x})\longrightarrow \ell
		$$
		for some $\ell\in \overline{B}(\overline{x},\varepsilon)$. Fix $j\geq 1$. For all $N\geq j$, we have
		$$
		d(f^j(f^{r_N}(\overline{x})),\overline{x})\geq \delta.
		$$
		Passing to the limit and using the continuity of $f^j$, we obtain
		$$
		d(f^j(\ell),\overline{x})\geq \delta.
		$$
		This holds for every $j\geq 1$. Hence the orbit of $\ell$ does not meet the open ball $B(\overline{x},\delta)$ after time $0$, which contradicts the assumption that every point in $\overline{B}(\overline{x},\varepsilon)$ has a dense orbit.
	\end{proof}
	
	We now arrive at the main result of this section.
	
	\begin{theorem} \label{Thm:CompactBall}
		Let $M$ be an infinite pointed metric space. Suppose that there are $\overline{x}\in M$ and $\varepsilon>0$ such that $\overline{B}(\overline{x},\varepsilon)$ is compact.
		Then, for any $f\in \Lip_0(M,M)$, $\widehat{f}$ has a non-trivial invariant subspace.
	\end{theorem}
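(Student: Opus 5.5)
The plan is to argue by contradiction. First reduce to the situation where every point other than $0$ has a dense orbit. Then use Lemma~\ref{lemma:AlmostP} to find an almost periodic point $y\neq 0$. Finally, show that almost periodicity of $y$ is incompatible with the orbit of $y$ accumulating at the fixed point $0$.

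\emph{Reductions.} By Lemma~\ref{isolated} we may assume $M$ has no isolated points. By Proposition~\ref{Prop: ISP}$(v)$ we may assume $f$ is injective. If some $x\neq 0$ is a fixed point, then $\{0,x\}$ is a closed invariant subset with two points, and it is proper since $M$ is infinite; Proposition~\ref{Prop: ISP2} then gives a non-trivial invariant subspace. So we may assume $0$ is the only fixed point. Then for every $x\neq 0$ the orbit $\Orb(x,f)$ has at least two points. If it is not dense, Proposition~\ref{Prop: ISP}$(ii)$ concludes. Hence we may assume every $x\neq 0$ has a dense orbit.

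\emph{Applying Lemma~\ref{lemma:AlmostP}.} The orbit of $0$ is not dense, so we cannot use the ball $\overline B(\overline x,\varepsilon)$ directly if it contains $0$. Since $\overline x$ is not isolated, pick $y\in B(\overline x,\varepsilon/2)$ with $y\neq 0$. Choose $0<\rho<\min\{\varepsilon/2,\,d(y,0)\}$. Then $\overline B(y,\rho)\subset \overline B(\overline x,\varepsilon)$ is a closed subset of a compact set, hence compact. It does not contain $0$, so every point of it has a dense orbit. Also $y$ is not isolated. Lemma~\ref{lemma:AlmostP} (with $f$ Lipschitz, hence continuous) therefore shows that $y$ is almost periodic.

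\emph{Contradiction.} Let $\delta:=d(y,0)/2>0$, and let $N$ be given by almost periodicity, so every window of $N$ consecutive iterates of $y$ (at times $\ge 1$) meets $\overline B(y,\delta)$. The maps $f,\dots,f^N$ are continuous at $0$ and fix $0$. So there is $\eta>0$ such that $d(z,0)<\eta$ implies $d(f^j(z),0)<\delta$ for all $j=1,\dots,N$. Since $\Orb(y,f)$ is dense, some iterate $f^n(y)$ lies in $B(0,\eta)$. Then $f^{n+j}(y)\in B(0,\delta)$ for $j=1,\dots,N$. On the other hand, $B(0,\delta)\cap \overline B(y,\delta)=\emptyset$ because $\delta<d(y,0)$. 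This contradicts almost periodicity on the window $\{n+1,\dots,n+N\}$.

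The only delicate step is making Lemma~\ref{lemma:AlmostP} applicable, since the fixed point $0$ has no dense orbit. This is handled by shrinking to a compact ball $\overline B(y,\rho)$ around a non-isolated point $y\neq 0$ that avoids $0$. Once that is done, the contradiction follows by continuity at $0$.
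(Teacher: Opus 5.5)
Your proof is correct and follows the paper's argument: you reduce to every non-zero point having a dense orbit, apply Lemma~\ref{lemma:AlmostP} on a compact ball that avoids $0$, and contradict almost periodicity because the orbit must stay near the fixed point $0$ for $N$ consecutive steps. The differences are minor. You use continuity of $f,\dots,f^N$ at $0$ instead of the paper's estimate $d(f^{n+k}(\overline x),0)\le L^k d(f^n(\overline x),0)$, so you need no reduction to $\Lip(f)>1$, and you explicitly dispose of non-zero fixed points via the invariant set $\{0,x\}$, which the paper's one-line reduction leaves implicit (one small fix: $B(0,\delta)\cap\overline B(y,\delta)=\emptyset$ follows from $2\delta=d(y,0)$, not merely from $\delta<d(y,0)$).
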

	
	\begin{proof}
		Let $f\in \Lip_0(M,M)$. By Proposition~\ref{Prop: ISP}, we may assume that every non-zero point of $M$ has a dense orbit and that $L:=\Lip(f)>1$. By Lemma~\ref{isolated}, we may also assume that $M$ has no isolated points.
		
		By replacing $\overline{x}$ with another point of the compact ball and by shrinking $\varepsilon>0$ if necessary, we may assume that $0\notin \overline{B}(\overline{x},\varepsilon)$.
		
		It follows from our reductions that every point of $\overline{B}(\overline{x},\varepsilon)$ has a dense orbit. Hence, by Lemma~\ref{lemma:AlmostP}, the point $\overline{x}$ is almost periodic for $f$. We shall prove that this is impossible.
		
		Put
		$$
		\rho:=\frac{d(\overline{x},0)}{2}>0.
		$$
		Let $N\in \N$. Since the orbit of $\overline{x}$ is dense in $M$, there exists $n\in \N$ such that
		$$
		d(f^n(\overline{x}),0)<L^{-N}\rho.
		$$
		Since $f(0)=0$ and $f$ is $L$-Lipschitz, for every $k\in\{0,\ldots,N\}$ we have
		$$
		d(f^{n+k}(\overline{x}),0)
		=
		d(f^k(f^n(\overline{x})),f^k(0))
		\leq
		L^k d(f^n(\overline{x}),0)
		<
		L^{k-N}\rho
		\leq \rho.
		$$
		Therefore, for every $k\in\{0,\ldots,N\}$,
		$$
		d(f^{n+k}(\overline{x}),\overline{x})
		\geq
		d(\overline{x},0)-d(f^{n+k}(\overline{x}),0)
		>
		2\rho-\rho
		=
		\rho.
		$$
		In particular,
		$$
		d(f^{n+k}(\overline{x}),\overline{x})>\rho,
		\qquad \forall k\in\{1,\ldots,N\}.
		$$
		Since this can be done for every $N\in\N$, the point $\overline{x}$ is not almost periodic, a contradiction.
	\end{proof}
	
	\begin{remark}
		Without such a compactness assumption, the situation is quite different. For instance, Read's examples on $\ell_1$ show that one may have a bounded linear operator for which every non-zero orbit is dense. In these examples, no compactness mechanism is available to force the existence of almost periodic points.
	\end{remark}

	\section{Metric spaces ensuring NTIS for \texorpdfstring{$T_{f,e}$}{Tf,e}}
	
	We now turn to the study of invariant subspaces for $T_{f,e}$, with the same general philosophy as in the previous section: we look for metric properties of $M$ which ensure the existence of a NTIS for every $f\in \Lip(M,M)$. We start with the following lemma, which is the natural analogue of Lemma~\ref{isolated}.
	
	\begin{lemma}\label{Prop:IsolatedT}
		Let $M$ be an infinite metric space with an isolated point.
		Then, for every Lipschitz map $f : M \to M$ and every $e\in M$, $T_{f,e}$ has a non-trivial invariant subspace.
	\end{lemma}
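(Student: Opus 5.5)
The plan is to follow the proof of Lemma~\ref{isolated} almost verbatim. The only change is that we no longer use the basepoint: every case will produce a non-trivial closed invariant subset in the sense of Definition~\ref{def:Invariant-subset}, and then Proposition~\ref{Prop: ISP2} gives the conclusion. That proposition applies for arbitrary $e\in M$ and makes no fixed-point assumption. Let $\overline{x}$ be the isolated point. By Proposition~\ref{Prop: ISP}$(v)$, we may assume from the start that $f$ is injective, since otherwise $T_{f,e}$ is non-injective and we are done. We then split into three cases according to the dynamics of $\overline{x}$.

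\emph{Case 1: $\overline{x}$ is not periodic}, that is, $f^n(\overline{x})\neq\overline{x}$ for all $n\geq 1$. Put $A:=\overline{\Orb(f(\overline{x}),f)}$. This set is closed and satisfies $f(A)\subset A$. The set $\{\overline{x}\}$ is open because $\overline{x}$ is isolated, and it does not meet $\Orb(f(\overline{x}),f)$. Hence $\overline{x}\notin A$, and so $A\subsetneq M$. It remains to check that $A$ has at least two points. If it did not, then $f(\overline{x})$ would be a fixed point. Then $f(f(\overline{x}))=f(\overline{x})$ with $f(\overline{x})\neq\overline{x}$ (as $\overline{x}$ is not fixed), which contradicts injectivity.

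\emph{Case 2: $\overline{x}$ is periodic of period $p\geq 2$.} Its orbit is a finite set, hence closed. It is invariant and has $p\geq 2$ points. It is proper because $M$ is infinite.

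\emph{Case 3: $f(\overline{x})=\overline{x}$.} The set $A:=M\setminus\{\overline{x}\}$ is closed because $\overline{x}$ is isolated, and it is clearly proper. It contains at least two points because $M$ is infinite. It is invariant: if $y\neq\overline{x}$ and $f(y)=\overline{x}=f(\overline{x})$, injectivity would be contradicted.

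In all three cases Proposition~\ref{Prop: ISP2} yields a non-trivial invariant subspace $S_A$ for $T_{f,e}$. I do not expect a real obstacle. The only points needing care are the two "at least two points" verifications, which are exactly where injectivity is used, and the observation that isolatedness of $\overline{x}$ is what makes both $A$ in Case 1 miss $\overline{x}$ and $M\setminus\{\overline{x}\}$ in Case 3 closed.
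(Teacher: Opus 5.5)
Your proof is correct and is essentially the paper's argument. Both use the same trichotomy for the isolated point (fixed, periodic of period $\geq 2$, non-periodic) with the same invariant sets, and Proposition~\ref{Prop: ISP2} indeed needs no assumption on $e$. The paper instead first uses Proposition~\ref{prop:conjugate} to make the isolated point the basepoint, cites Lemma~\ref{isolated} in the fixed case, and in the periodic case exhibits $\delta(f(e))$ as a periodic vector of $T_{f,e}$, whose orbit spans exactly your $S_A$ (with $\overline{x}=e$); your basepoint-free version avoids these detours and is slightly cleaner.
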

	
	\begin{proof}
		Thanks to Proposition~\ref{prop:conjugate}, we may assume that $e$ is an isolated point of $M$. By Proposition~\ref{Prop: ISP}, we may also assume that $f$ is injective.
		
		If $f(e)=e$, then $T_{f,e}=\widehat f : \F(M,e)\to \F(M,e)$, and Lemma~\ref{isolated} gives the result. We may therefore assume that $f(e)\neq e$.
		
		Assume first that there exists $n\geq 2$ such that $f^n(e)=e$. Then
		$$
		T_{f,e}^n(\delta(f(e)))
		=
		\delta(f^{n+1}(e))-\delta(f^n(e))
		=
		\delta(f(e)).
		$$
		Thus $\delta(f(e))$ is a periodic point of $T_{f,e}$. Hence
		$$
		\overline{\lspan}\Orb(\delta(f(e)),T_{f,e})
		$$
		is a finite-dimensional non-zero invariant subspace. Since $M$ is infinite, $\F(M,e)$ is infinite-dimensional, so this subspace is proper.
		
		Assume now that $e\notin \Orb(f(e),f)$. Since $e$ is isolated, $e\notin \overline{\Orb(f(e),f)}$.
		Set $A:=\overline{\Orb(f(e),f)}$. Then $A$ is a proper closed invariant subset of $M$ for $f$. Moreover, $A$ contains at least two points. Indeed, if $A$ were a singleton, then $f(e)$ would be a fixed point of $f$, contradicting the injectivity of $f$ and the fact that $f(e)\neq e$. Thus $A$ is a non-trivial closed invariant subset, and Proposition~\ref{Prop: ISP2} applies.
	\end{proof}
	
	We now continue with the next proposition, which should be compared with Proposition~\ref{Prop-Connected-Comp-fhat}.
	
	\begin{proposition}\label{Prop:InfiniteConnectedComponent}
		Let $M$ be a complete metric space and let $f : M \to M$ be a Lipschitz map. Assume that $M$ has infinitely many connected components and that one of them has non-empty interior.
		Then, for every $e\in M$, $T_{f,e}$ has a non-trivial invariant subspace.
	\end{proposition}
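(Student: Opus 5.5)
The plan is to mimic the argument of Proposition~\ref{Prop-Connected-Comp-fhat}, but without using a fixed point. First I apply the standard reductions. By Proposition~\ref{Prop: ISP} we may assume that $f$ is injective and that every non-trivial orbit is dense. In fact, if some point $x$ has a non-dense orbit $\Orb(x,f)$ containing at least two points, case $(ii)$ already applies. If instead $\Orb(x,f)$ is a singleton, then $x$ is a fixed point. By Proposition~\ref{prop:conjugate} we may then take $e=x$, so that $T_{f,e}=\widehat f$ on $\F(M,x)$, and Proposition~\ref{Prop-Connected-Comp-fhat} gives the conclusion, since $M$ is complete, infinite (it has infinitely many components) and has at least two components, one with non-empty interior. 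By Lemma~\ref{Prop:IsolatedT} we may also assume that $M$ has no isolated points. Consequently we may assume that $f$ has no fixed points and that every orbit is dense in $M$.

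Next, let $C_a$ be a component with non-empty interior, and fix $x\in C_a$. Since $\Orb(x,f)$ is dense and $C_a$ has non-empty interior $U$, there exists $n\geq 1$ with $f^n(x)\in U\subset C_a$. As $f$ is continuous, it maps each connected component into a connected component. Hence $f$ induces a map $\sigma$ on the index set $I$ of components, with $f(C_i)\subset C_{\sigma(i)}$, and the relation $f^n(x)\in C_a$ gives $\sigma^n(a)=a$. The whole orbit of $x$ is therefore contained in the finite union $\bigcup_{k=0}^{n-1}C_{\sigma^k(a)}$.

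This finite union of components is closed, since components are closed. It is also proper, because $M$ has infinitely many components. Hence $\overline{\Orb(x,f)}\neq M$. The closure of this orbit contains at least two points: $x$ is not fixed, so $f(x)\neq x$. By Proposition~\ref{Prop: ISP2}, or equivalently case $(ii)$ of Proposition~\ref{Prop: ISP}, this produces a non-trivial invariant subspace for $T_{f,e}$. In fact the argument does not even need the reductions beyond the fixed-point case: either $x$ is fixed, handled as above, or $\overline{\Orb(x,f)}$ is a proper closed invariant set with at least two points.

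The only delicate step is the fixed-point case. One must check that the hypotheses of Proposition~\ref{Prop-Connected-Comp-fhat} hold with the basepoint moved to the fixed point, and that conjugacy via Proposition~\ref{prop:conjugate} transports the invariant subspace back to $\F(M,e)$. Both points are immediate. I would also double-check that $\sigma$ is well defined, which uses only that the image of a connected set is connected.
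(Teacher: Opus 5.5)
Your proof is correct and is essentially the paper's argument: you dispose of a fixed point via Proposition~\ref{prop:conjugate} and Proposition~\ref{Prop-Connected-Comp-fhat}, dispose of isolated points via Lemma~\ref{Prop:IsolatedT}, and then rerun the finite-union-of-components argument from Proposition~\ref{Prop-Connected-Comp-fhat}. One caveat: your closing aside that only the fixed-point reduction is needed is wrong, because if $C_a=\{x\}$ is an isolated singleton the orbit of $x$ can be dense without ever returning to $C_a$ at a positive time; so the isolated-point reduction is genuinely required, and your main argument does use it.
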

	
	\begin{proof}
		If $f$ has a fixed point $a\in M$, then $T_{f,e}$ is conjugate to $T_{f,a}=\widehat f_a$. Hence Proposition~\ref{Prop-Connected-Comp-fhat} gives the conclusion. We may therefore assume that $f$ has no fixed point.
		
		Let $C_a$ be a connected component with non-empty interior, and let $x\in C_a$. If $C_a$ is finite, then $C_a$ is a singleton, hence $M$ has an isolated point, and Lemma~\ref{Prop:IsolatedT} applies. Thus we may assume that $C_a$ is infinite.
		
		If $\Orb(x,f)$ is not dense in $M$, then Proposition~\ref{Prop: ISP} applies, since the orbit is non-trivial.
		
		Finally, if $\Orb(x,f)$ is dense in $M$, we obtain a contradiction as in the proof of Proposition \ref{Prop-Connected-Comp-fhat}. 
	\end{proof}
	
	We conclude this first part of the section with the following theorem, which should be compared with Theorem~\ref{Thm:CompactBall}.
	
	\begin{theorem}\label{Thm-Tfe-Compactball}
		Let $M$ be a complete non-compact metric space, and let $f\in \Lip(M,M)$. Assume that there are $x\in M$ and $r>0$ such that $\overline{B}(x,r)$ is compact.
		Then, for every $e\in M$, $T_{f,e}$ admits a non-trivial invariant subspace.
	\end{theorem}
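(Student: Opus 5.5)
The plan is to reduce, using the earlier results, to the case where every point of the compact ball has a dense orbit. Lemma~\ref{lemma:AlmostP} then makes the center almost periodic, and almost periodicity together with compactness of the ball will force $M$ itself to be compact, contradicting the hypothesis.

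\emph{Reductions.} Since $M$ is non-compact but contains a compact ball, $M$ is infinite. We may assume $M$ is separable, since otherwise every operator on the non-separable space $\F(M,e)$ has a non-trivial invariant subspace.
\begin{enumerate}[$(a)$]
\item If $M$ has an isolated point, Lemma~\ref{Prop:IsolatedT} applies.
\item If $f$ has a fixed point $a$, then $T_{f,e}$ is conjugate to $T_{f,a}=\widehat f_a$ by Proposition~\ref{prop:conjugate}, and Theorem~\ref{Thm:CompactBall} applies with basepoint $a$.
\end{enumerate}
So we may assume that $M$ has no isolated points and that $f$ has no fixed point. In particular, every orbit has at least two points, so it is non-trivial. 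By Proposition~\ref{Prop: ISP}$(ii)$ we may then assume that every orbit of $f$ is dense in $M$. Hence every point of $\overline B(x,r)$ has a dense orbit, $x$ is not isolated, and Lemma~\ref{lemma:AlmostP} shows that $x$ is almost periodic for $f$.

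\emph{Main step.} The key step is to show that a dense orbit of an almost periodic point whose small balls are compact forces $M$ to be compact. This is the classical Gottschalk-type argument. Fix $\delta:=r$ in the definition of almost periodicity and take the corresponding $N\in\N$. Let $n>N$. Applying the definition with $n-N-1$ in place of $n$ gives some $k\in\{n-N,\ldots,n-1\}$ with $d(f^k(x),x)\le r$. Writing $f^n(x)=f^{n-k}(f^k(x))$ with $1\le n-k\le N$, we obtain
$$
\Orb(x,f)\subset \{x,f(x),\ldots,f^N(x)\}\cup\bigcup_{j=1}^{N} f^j\bigl(\overline B(x,r)\bigr).
$$
The right-hand side is compact, because $f$ is continuous and $\overline B(x,r)$ is compact. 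Since $\Orb(x,f)$ is dense in $M$, we get that $M$ is contained in this compact set, so $M$ is compact. This contradicts the assumption that $M$ is non-compact, and the theorem follows.

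\emph{Expected difficulty.} I expect the only delicate points to be the index bookkeeping in the syndeticity step and checking that every case excluded in the reductions is really covered by an earlier result. The role of non-compactness is visible here: in compact $M$ (for instance an irrational rotation of the circle) minimal dynamics genuinely occurs, so no argument of this kind could work without it. The Lipschitz property of $f$ is used only through the earlier reductions; the final contradiction uses continuity alone.
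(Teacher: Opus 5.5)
Your proof is correct and follows the paper's argument essentially step by step. You use the same reductions: a fixed point is handled via conjugacy and Theorem~\ref{Thm:CompactBall}, isolated points via Lemma~\ref{Prop:IsolatedT}, and non-dense orbits via Proposition~\ref{Prop: ISP}. You then apply Lemma~\ref{lemma:AlmostP} with $\delta=r$ to cover the orbit by finitely many images $f^j(\overline B(x,r))$, which forces $M$ to be compact.

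The only difference is cosmetic. You absorb the initial segment $\{x,\ldots,f^N(x)\}$ into the compact set. The paper instead discards it by noting that the tail of a dense orbit is still dense when $M$ has no isolated points.
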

	
	\begin{proof}
		If $f$ has a fixed point $a\in M$, then $T_{f,e}$ is conjugate to $T_{f,a}=\widehat f_a$, and Theorem~\ref{Thm:CompactBall} applies. We may therefore assume that $f$ has no fixed point.
		
		By Proposition~\ref{Prop: ISP}, we may assume that every orbit is dense in $M$. Indeed, since $f$ has no fixed point, every orbit contains at least two points. By Lemma~\ref{Prop:IsolatedT}, we may also assume that $M$ has no isolated points.
		
		Since every point of $\overline{B}(x,r)$ has a dense orbit, Lemma~\ref{lemma:AlmostP} implies that $x$ is almost periodic for $f$. Applying almost periodicity with $\delta=r$, there exists $N\in \N$ such that, for every $n\in \N$, there is $k\in\{n+1,\ldots,n+N\}$ satisfying $f^k(x)\in \overline{B}(x,r)$. We claim that
		$$
		M\subseteq \bigcup_{j=0}^{N-1} f^j(\overline{B}(x,r)).
		$$
		Let $m\geq N$. Applying the previous assertion with $n=m-N$, there exists $i\in \{m-N+1,\ldots,m\}$
		such that $f^i(x)\in \overline{B}(x,r)$. If $j=m-i$, then $j\in\{0,\ldots,N-1\}$ and
		$$
		f^m(x)=f^j(f^i(x))\in f^j(\overline{B}(x,r)).
		$$
		Hence
		$$
		\{f^m(x):m\geq N\}
		\subseteq
		\bigcup_{j=0}^{N-1} f^j(\overline{B}(x,r)).
		$$
		Since $\Orb(x,f)$ is dense and $M$ has no isolated points, the tail $\{f^m(x):m\geq N\}$ is still dense in $M$. Therefore
		$$
		M
		\subseteq
		\bigcup_{j=0}^{N-1} f^j(\overline{B}(x,r)).
		$$
		The right-hand side is compact, being a finite union of compact sets. Thus $M$ is compact, a contradiction.
	\end{proof}
	
	As one can see, the techniques used in the previous section to treat the compact-ball case or the disconnected case for $\widehat f$ do not immediately give results of the same strength for the operators $T_{f,e}$. To obtain sharper statements, one needs additional ideas. This is what we do next, although we do not manage to cover all the cases treated in the previous section.
	
	We shall use \cite{AP23}, where Aliaga and Pernecká systematically study the links between measures on a metric space and elements of the corresponding Lipschitz-free space. In particular, we shall use the following consequence of \cite[Proposition~4.4]{AP23}: every finite Borel measure $\mu$ on $M$ with compact support induces an element of $\F(M,e)$, denoted by
	$$
	\mathcal L\mu=\int_M \delta(x)\,d\mu(x),
	$$
	where the integral is understood in the Bochner sense. Indeed, the condition appearing in \cite[Proposition~4.4]{AP23} is that $\mu$ must have finite first moment, namely
	$$
	\int_M d(x,e)\,d|\mu|(x)<+\infty.
	$$
	This condition is automatically satisfied when $\mu$ has compact support, since the function $x\mapsto d(x,e)$ is continuous and therefore bounded on $\supp(\mu)$.
	
	Let us also comment on the assumption $\mu(\{e\})=0$ appearing in \cite[Proposition~4.4]{AP23}, where the basepoint is denoted by $0$. This is essentially a normalization. Indeed, every function in $\Lip_0(M,e)$ vanishes at $e$, and therefore any mass placed at $e$ is invisible to the induced functional. Equivalently, $\delta(e)=0$ in $\F(M,e)$. Thus a measure $\mu$ may be replaced by
	$$
	\mu-\mu(\{e\})\delta_e
	$$
	without changing the induced element of $\F(M,e)$.
	
	For positive finite measures with compact support, we shall also use the fact that the support of $\mathcal L\mu$ agrees with the usual support of $\mu$ (see \cite[Proposition~4.8]{AP23}) and there is uniqueness of the representing measure ($\mathcal{L}\mu = \mathcal{L} \nu \implies \mu = \nu$, see \cite[Proposition~4.9]{AP23}). As usual, if $\mu$ is a measure on a compact metric space $K$ and $f:K\to K$ is continuous, then $f_\#\mu$ denotes the pushforward of $\mu$ by $f$.
	
	\begin{proposition}\label{prop:push-forward-T}
		Let $K$ be a compact metric space, let $e\in K$, and let $f\colon K\to K$ be Lipschitz. For every finite positive Borel measure $\mu$ on $K$, one has
		$$
		T_{f,e}(\mathcal L\mu)
		=
		\mathcal L(f_\#\mu)-\mu(K)\delta(f(e)).
		$$
	\end{proposition}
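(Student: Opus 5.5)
The plan is to use that $\mathcal L\mu=\int_K \delta(x)\,d\mu(x)$ is a Bochner integral and that $T_{f,e}$ is a bounded linear operator, so it commutes with the integral. Since $K$ is compact, $\mu$ automatically has finite first moment, so $\mathcal L\mu\in\F(K,e)$ is well defined. Bochner integrals commute with bounded operators, hence
$$
T_{f,e}(\mathcal L\mu)=\int_K T_{f,e}(\delta(x))\,d\mu(x)=\int_K \bigl(\delta(f(x))-\delta(f(e))\bigr)\,d\mu(x).
$$
The map $x\mapsto \delta(f(x))-\delta(f(e))$ is continuous, because it is Lipschitz with constant $\Lip(f)$. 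It is therefore bounded on $K$, so both pieces are Bochner integrable and we can split the integral. The constant part gives $\mu(K)\delta(f(e))$.

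For the remaining term $\int_K\delta(f(x))\,d\mu(x)$, I would invoke the change of variables formula for pushforward measures. For any bounded Borel (here continuous) function $g\colon K\to X$ into a Banach space,
$$
\int_K g\circ f\,d\mu=\int_K g\,d(f_\#\mu).
$$
Applied with $g=\delta$, this gives $\int_K \delta(f(x))\,d\mu(x)=\int_K\delta(y)\,d(f_\#\mu)(y)=\mathcal L(f_\#\mu)$. Here $f_\#\mu$ is again a finite positive Borel measure on the compact space $K$, so $\mathcal L(f_\#\mu)$ is defined. Combining the two pieces yields the formula.

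The only delicate point is justifying the vector-valued change of variables. I would avoid it by testing against functionals. Since $\Lip_0(K)=\F(K,e)^*$ separates points, it suffices to check that both sides agree when paired with every $\varphi\in\Lip_0(K)$. Pairing commutes with Bochner integrals, so the left side gives
$$
\int_K \bigl(\varphi(f(x))-\varphi(f(e))\bigr)\,d\mu(x)=\int_K\varphi\,d(f_\#\mu)-\mu(K)\varphi(f(e)),
$$
by the scalar change of variables formula. This is exactly the pairing of the right-hand side with $\varphi$, which completes the argument. This scalar verification is essentially routine; the main care is the bookkeeping about the basepoint. Mass of $f_\#\mu$ sitting at $e$ is harmless, since $\delta(e)=0$, as noted before the statement.
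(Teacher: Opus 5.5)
Your proof is correct and follows essentially the same route as the paper: commute the bounded operator $T_{f,e}$ with the Bochner integral, use $T_{f,e}(\delta(x))=\delta(f(x))-\delta(f(e))$, and identify $\int_K\delta(f(x))\,d\mu(x)$ with $\mathcal L(f_\#\mu)$. Your duality check against every $\varphi\in\Lip_0(K)$ is a sound way to justify the vector-valued change of variables, a step the paper leaves implicit.
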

	
	\begin{proof}
		By linearity and continuity of the Bochner integral,
		$$
		\begin{aligned}
			T_{f,e}(\mathcal L\mu)
			&=
			T_{f,e}\left(\int_K \delta(x)\,d\mu(x)\right) 
			=
			\int_K T_{f,e}(\delta(x))\,d\mu(x) \\
			&=
			\int_K \bigl(\delta(f(x))-\delta(f(e))\bigr)\,d\mu(x) 
			=
			\mathcal L(f_\#\mu)-\mu(K)\delta(f(e)).
		\end{aligned}
		$$
	\end{proof}
	
	
	\begin{proposition}\label{thm:two-measures-T}
		Let $K$ be an infinite compact metric space, let $f\colon K\to K$ be Lipschitz, and let $e\in K$. Assume that $f$ admits two distinct invariant Borel probability measures. Then $T_{f,e}$ has a non-trivial invariant closed subspace. In fact, $T_{f,e}$ has a non-zero fixed point.
	\end{proposition}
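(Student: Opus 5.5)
Take two distinct invariant Borel probability measures $\mu\neq\nu$ for $f$, so that $f_\#\mu=\mu$ and $f_\#\nu=\nu$. The candidate fixed point is $z:=\mathcal L\mu-\mathcal L\nu\in \F(K,e)$. Since $K$ is compact, both measures have compact support, so $\mathcal L\mu$ and $\mathcal L\nu$ are well defined elements of $\F(K,e)$ by the discussion preceding Proposition~\ref{prop:push-forward-T}.

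The first step applies Proposition~\ref{prop:push-forward-T} to each measure. Using $\mu(K)=\nu(K)=1$, this gives
$$
T_{f,e}(\mathcal L\mu)=\mathcal L(f_\#\mu)-\delta(f(e))=\mathcal L\mu-\delta(f(e)),
$$
and similarly $T_{f,e}(\mathcal L\nu)=\mathcal L\nu-\delta(f(e))$. Subtracting the two identities, the correction terms $\delta(f(e))$ cancel because both measures have the same total mass. Hence $T_{f,e}z=z$.

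The second step is to show $z\neq 0$. By uniqueness of the representing measure (\cite[Proposition~4.9]{AP23}), $\mathcal L\mu=\mathcal L\nu$ would force $\mu=\nu$, at least after the normalization at $e$. This normalization is the one delicate point. Since $\delta(e)=0$, the map $\mathcal L$ only sees $\mu-\mu(\{e\})\delta_e$. So equality $\mathcal L\mu=\mathcal L\nu$ only gives $\mu|_{K\setminus\{e\}}=\nu|_{K\setminus\{e\}}$. But both are probability measures, so the masses at $e$ then agree too, and therefore $\mu=\nu$, a contradiction. To be safe, one can instead argue directly by duality. Two distinct Borel probability measures on a compact metric space are separated by some Lipschitz function $g$, because Lipschitz functions are uniformly dense in $C(K)$. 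Set $\varphi:=g-g(e)\in \Lip_0(K,e)$. Then
$$
\langle z,\varphi\rangle=\int g\,d\mu-\int g\,d\nu-g(e)\bigl(\mu(K)-\nu(K)\bigr)=\int g\,d\mu-\int g\,d\nu\neq 0.
$$
This requires only that pairing with $\mathcal L\mu$ computes $\int (g-g(e))\,d\mu$, which follows from the Bochner integral definition.

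Finally, the line $\K z$ is a closed invariant subspace and is nonzero. It is proper because $K$ is infinite, so $\F(K,e)$ is infinite-dimensional. The main obstacle is just the nonvanishing of $z$ and the basepoint normalization, which the duality argument handles cleanly.
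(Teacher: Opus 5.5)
Your proof is correct and follows the paper's argument. It uses the same vector $\mathcal L\mu-\mathcal L\nu$, the same cancellation of the $\delta(f(e))$ terms via Proposition~\ref{prop:push-forward-T}, and the same invariant line $\K(\mathcal L\mu-\mathcal L\nu)$. The paper simply cites \cite[Proposition~2.9]{AP23} for non-vanishing, whereas your duality check with $\varphi=g-g(e)$ proves it directly and handles the mass at $e$; this is the same device the paper uses in Proposition~\ref{prop:periodic-closed-set-T}.
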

	
	\begin{proof}
		Let $\mu$ and $\nu$ be two distinct invariant Borel probability measures on $K$, and define
		$$
		\eta:=\mathcal L\mu-\mathcal L\nu.
		$$
		Since $\mu \neq \nu$, \cite[Proposition~2.9]{AP23} gives that $\eta \neq 0$. Using Proposition~\ref{prop:push-forward-T} and the invariance of $\mu$ and $\nu$, we obtain
		$$
		\begin{aligned}
			T_{f,e}(\eta)
			&=
			T_{f,e}(\mathcal L\mu)-T_{f,e}(\mathcal L\nu) \\
			&=
			\bigl(\mathcal L(f_\#\mu)-\delta(f(e))\bigr)
			-
			\bigl(\mathcal L(f_\#\nu)-\delta(f(e))\bigr) \\
			&=
			\mathcal L\mu-\mathcal L\nu
			=
			\eta.
		\end{aligned}
		$$
		Thus $\eta$ is a non-zero fixed point of $T_{f,e}$.
	\end{proof}
	
	The next proposition is the first step toward an analogue of Proposition~\ref{Prop-Connected-Comp-fhat} in the compact setting.
	
	\begin{proposition}\label{prop:periodic-closed-set-T} 
		Let $K$ be an infinite compact metric space, let $f\in \Lip(K,K)$, and let $e\in K$. Assume that there exist a non-empty closed set $C\subset K$ and an integer $n\geq 2$ such that the sets $C,\ f(C),\ \dots,\ f^{n-1}(C)$ are pairwise disjoint and $f^n(C)=C$. Then $T_{f,e}$ has a non-trivial invariant closed subspace. 
	\end{proposition}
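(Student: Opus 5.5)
We will produce two distinct invariant Borel probability measures and then invoke Proposition~\ref{thm:two-measures-T}. A direct invariant-set argument is not available: $D:=C\cup f(C)\cup\dots\cup f^{n-1}(C)$ is closed and invariant, but it may equal $K$ (e.g.\ $K$ is a finite union of cyclically permuted pieces). So we pass to measures.

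First we note that $f^n$ maps the compact set $C$ onto itself. By the Krylov--Bogolyubov theorem applied to the continuous map $f^n|_C\colon C\to C$, there exists a Borel probability measure $\nu$ on $C$ with $(f^n)_\#\nu=\nu$. Regard $\nu$ as a measure on $K$ supported in $C$. We average it along the cycle:
$$
\mu_0:=\frac1n\sum_{j=0}^{n-1} f^j_\#\nu .
$$
Then $f_\#\mu_0=\frac1n\sum_{j=1}^{n} f^j_\#\nu=\mu_0$, because $f^n_\#\nu=\nu$. Hence $\mu_0$ is $f$-invariant, and $\mu_0(f^j(C))=1/n$ for each $j$. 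This uses pairwise disjointness and the fact that $f^j_\#\nu$ is concentrated on $f^j(C)$.

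Next we produce a second invariant measure. The natural choice is the shifted average
$$
\mu_1:=\frac1n\sum_{j=0}^{n-1} f^j_\#(f_\#\nu),
$$
but this equals $\mu_0$ again. So we instead build a genuinely different measure by a case split.

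\emph{Case 1: $f^n|_C$ has two distinct invariant probability measures $\nu\neq\nu'$.} Their cyclic averages $\mu_0,\mu_0'$ are both $f$-invariant. Restricted to $C$, $\mu_0$ gives $\nu/n$ and $\mu_0'$ gives $\nu'/n$. Indeed, $f^j_\#\nu$ lives on $f^j(C)$, which is disjoint from $C$ for $1\le j\le n-1$. So $\mu_0\neq\mu_0'$, and Proposition~\ref{thm:two-measures-T} applies.

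\emph{Case 2: $f^n|_C$ is uniquely ergodic, and $f$ is not uniquely ergodic on $K$.} Then Proposition~\ref{thm:two-measures-T} applies directly.

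\emph{Case 3: $f$ is uniquely ergodic on $K$, with unique invariant measure $\mu_0$.} This is the main obstacle, and here we would switch to a spectral argument. Set $\omega:=e^{2\pi i/n}$ (working over $\C$; over $\R$ we take real and imaginary parts, or use $n\ge2$ with a two-dimensional real invariant subspace). Define
$$
\eta:=\sum_{j=0}^{n-1}\omega^{-j}\,\mathcal L(f^j_\#\nu).
$$
By Proposition~\ref{prop:push-forward-T},
$$
T_{f,e}\mathcal L(f^j_\#\nu)=\mathcal L(f^{j+1}_\#\nu)-\delta(f(e)).
$$
Since $\sum_j\omega^{-j}=0$, the constant terms cancel, and together with $f^n_\#\nu=\nu$ this gives $T_{f,e}\eta=\omega\eta$.

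We then check that $\eta\neq0$. The measures $f^j_\#\nu$ are mutually singular, being supported on disjoint sets, so the complex measure $\sum_j\omega^{-j}f^j_\#\nu$ is non-zero. By the injectivity of $\mathcal L$ on measures (\cite[Proposition~2.9]{AP23}, applied after splitting into real and imaginary and then positive parts), $\eta\neq0$.

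It follows that $\lspan\{\eta\}$ is a one-dimensional invariant subspace. It is proper because $\F(K,e)$ is infinite-dimensional, $K$ being infinite. In the real case we use $\lspan\{\Re\eta,\Im\eta\}$ in the same way.

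In fact this eigenvector argument works in every case, so the proof can skip the case split altogether. The only delicate points are the injectivity of $\mathcal L$ for signed or complex measures with compact support and the cancellation of the $\delta(f(e))$ terms. We expect the non-vanishing of $\eta$ to be the step needing the most care.
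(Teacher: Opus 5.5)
Your final eigenvector argument is correct and is essentially the paper's proof: Krylov--Bogolyubov for $f^n|_C$, a root-of-unity combination of the pushforwards $\mathcal L(f^j_\#\nu)$, cancellation of the $\delta(f(e))$ terms via $\sum_j\omega^{-j}=0$, and real and imaginary parts over $\R$ (the preliminary case split is unnecessary, as you observe). The only difference is the check $\eta\neq0$: the paper pairs $\eta$ with $g-g(e)$ for a Lipschitz $g$ equal to $1$ on $C$ and $0$ on the other $f^j(C)$, whereas your appeal to injectivity of $\mathcal L$ should note that $\mathcal L$ annihilates multiples of the Dirac mass at $e$ — harmless here, since your measure has total mass $\sum_j\omega^{-j}=0$ but mass $1$ on $C$.
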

	
	\begin{proof}
		Suppose first that $\K=\C$. By the Krylov--Bogolyubov theorem (see, e.g. \cite[Corollary 6.9.1]{Walters}), applied to the continuous map $f^n:C\to C$, there exists an $f^n$-invariant Borel probability measure $\mu_0$ on $C$. For $j=0,\dots,n-1$, define
		$$
		\mu_j:=(f^j)_\#\mu_0.
		$$
		Then each $\mu_j$ is a probability measure supported on $f^j(C)$, and
		$$
		f_\#\mu_j=\mu_{j+1},
		\qquad j=0,\dots,n-1,
		$$
		where indices are understood modulo $n$.
		
		Let $\omega=e^{2\pi i/n}$ and define
		$$
		\eta:=\sum_{j=0}^{n-1}\omega^j\mathcal L\mu_j\in \F(K,e).
		$$
		Using Proposition~\ref{prop:push-forward-T}, and the fact that each $\mu_j$ is a probability measure, we obtain
		$$
		\begin{aligned}
			T_{f,e}(\eta)
			&=
			\sum_{j=0}^{n-1}\omega^j T_{f,e}(\mathcal L\mu_j) \\
			&=
			\sum_{j=0}^{n-1}\omega^j\bigl(\mathcal L(f_\#\mu_j)-\delta(f(e))\bigr) \\
			&=
			\sum_{j=0}^{n-1}\omega^j\mathcal L\mu_{j+1}
			-
			\left(\sum_{j=0}^{n-1}\omega^j\right)\delta(f(e)).
		\end{aligned}
		$$
		Since $\sum_{j=0}^{n-1}\omega^j=0$, it follows that
		$$
		T_{f,e}(\eta)
		=
		\sum_{j=0}^{n-1}\omega^j\mathcal L\mu_{j+1}
		=
		\omega^{-1}\eta.
		$$
		
		It remains to check that $\eta\neq 0$. Since the compact sets $C,\ f(C),\ \dots,\ f^{n-1}(C)$
		are pairwise disjoint, there exists a Lipschitz function $g:K\to\C$ such that $g=1$ on $C$ and $g=0$ on $f^j(C)$ for every $j=1,\dots,n-1$. Set $h:=g-g(e)$. Then $h\in \Lip_0(K,e)$, and
		$$
		\begin{aligned}
			\langle \eta,h\rangle
			&=
			\sum_{j=0}^{n-1}\omega^j\int_K (g-g(e))\,d\mu_j \\
			&=
			\sum_{j=0}^{n-1}\omega^j\int_K g\,d\mu_j
			-
			g(e)\sum_{j=0}^{n-1}\omega^j\mu_j(K) \\
			&=
			\sum_{j=0}^{n-1}\omega^j\int_K g\,d\mu_j
			=
			1.
		\end{aligned}
		$$
		Thus $\eta\neq 0$. Hence $\mathbb C\eta$ is a non-trivial invariant subspace for $T_{f,e}$.
		
		Assume now that $\K=\R$. If $n=2$, then $\omega=-1$, and the above construction gives $\eta=\mathcal L\mu_0-\mathcal L\mu_1\in \F(K,e)$,
		with $T_{f,e}(\eta)=-\eta$. Hence $\R\eta$ is a non-trivial invariant subspace.
		
		If $n\geq 3$, we apply the complex case to the complexification of $\F(K,e)$. Writing the corresponding eigenvector as $\eta=u+iv$, with $u,v\in \F(K,e)$, the real subspace $\lspan_{\R}\{u,v\}$ is non-zero, finite-dimensional, and invariant under $T_{f,e}$. Since $K$ is infinite, $\F(K,e)$ is infinite-dimensional, so this subspace is proper.
	\end{proof}
	
	In the noncompact setting we have the following partial result.
	
	\begin{proposition}
		\label{prop: slightly noncompact}
		Let $M$ be an infinite complete metric space and let $e\in M$. 
		Assume that $M$ has at least two connected components, one of them with at least two points, and that there is a closed set $S\subsetneq M$ such that \[\mathrm{dist}(S,M\setminus S):=\inf\{d(x,y):~x\in S,~y\in M\setminus S\} >0.\] 
		Then, $T_{f,e}$ admits a nontrivial invariant subspace for any Lipschitz map ${f:M\to M}$.   
	\end{proposition}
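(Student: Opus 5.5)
The plan is to reduce, via the earlier results, to the case where every orbit of $f$ is dense, and then to use the uniformly separated set $S$ to reach a contradiction. First, if $f$ has a fixed point $a$, then $T_{f,e}$ is conjugate to $\widehat f_a$ on $\F(M,a)$, and Proposition~\ref{Prop-Connected-Comp-fhat} does not directly apply because no interior is assumed. So I handle the general case directly. By Proposition~\ref{Prop: ISP}$(i),(ii),(v)$ I may assume that $f$ is injective, has dense range, and that every orbit with at least two points is dense in $M$.

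Let $C$ be a connected component with at least two points. Since $f$ is continuous, it maps components into components. If $f(C)\subset C$, then $\overline C=C$ is closed, invariant, contains two points, and is proper because $M$ has at least two components. Proposition~\ref{Prop: ISP2} then gives the conclusion. Hence assume $f(C)\not\subset C$, so that $f(C)$ lies in a different component. In particular, points of $C$ are not fixed, and their orbits contain at least two points and are therefore dense.

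Next I use $S$. Put $\rho:=\mathrm{dist}(S,M\setminus S)>0$. Then $S$ and $M\setminus S$ are both clopen, and both are non-empty: $S\neq\emptyset$ (implicitly, otherwise the distance is an infimum over the empty set), and $S\subsetneq M$. Each connected component lies entirely in $S$ or entirely in $M\setminus S$. The key point is that a Lipschitz image of a connected set is connected, but more is true: a \emph{$\rho/\Lip(f)$-chain} is mapped to a $\rho$-chain, which cannot jump between $S$ and $M\setminus S$. So consider the $\rho'$-chain components of $M$ for $\rho'=\rho/\max(1,\Lip(f))$, namely the equivalence classes of points joinable by finite chains with steps $<\rho'$. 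Each such class $Q$ is clopen and is contained in $S$ or in $M\setminus S$. Moreover $f(Q)$ lies inside a single $\rho$-chain class, hence inside $S$ or inside $M\setminus S$. Define the clopen sets $A_1=S$ and $A_2=M\setminus S$. Then $f$ sends each $\rho'$-class into $A_1$ or into $A_2$. Take $x\in C$ with a dense orbit. The orbit must enter both $A_1$ and $A_2$, since these are open and non-empty.

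I then look for a clopen proper subset that is invariant, or a periodic cycle of clopen sets, in order to contradict the density of the orbit. The idea is as follows. The $\rho'$-class $Q_x$ of $x$ is open and non-empty, so the dense orbit returns to $Q_x$ at some time $n\ge1$. Because chain classes are mapped into chain classes (at scale $\rho$, not $\rho'$), iterating does not preserve the scale. This is the main obstacle: the scale degrades by a factor $\Lip(f)$ at each iterate, so a naive argument does not produce a periodic finite family of clopen pieces. To overcome it, I would instead use the fact that the dense orbit $\{f^k(x)\}$ returning into $Q_x$ gives, by continuity of $f^n$, an open set $U\ni x$ with $f^n(U)\subset Q_x$. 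I would then try to show, using injectivity and the dense range, that the closure of $\bigcup_{j<n} f^j(\text{component of }x)$ is a proper closed invariant set, mimicking the finite-union-of-components argument of Proposition~\ref{Prop-Connected-Comp-fhat}. The uniform gap $\rho$ replaces the non-empty interior hypothesis by guaranteeing that the component of $x$ is contained in the open set $S$ (or its complement). That in turn forces the orbit to hit a prescribed open set, namely the complement part, at controlled times. If this finite-union argument fails, I would fall back on a contradiction with the minimality of $f$ restricted to the clopen sets $S$ and $M\setminus S$, showing that $f^{-1}(S)\cap S$ yields a proper invariant clopen piece.
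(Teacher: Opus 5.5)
There is a genuine gap, and it is in the strategy rather than in a detail. Your argument stops at a list of things you ``would try''. Beyond that, the approach cannot be completed: after your reductions you are looking for a non-dense non-trivial orbit, or some non-trivial closed invariant subset, and the hypotheses do not rule out that $f$ is minimal. Take $M=\mathbb T\times\{0,1\}$ with $d((z,j),(w,k))=|z-w|+|j-k|$ and $S=\mathbb T\times\{0\}$, so that $\mathrm{dist}(S,M\setminus S)=1$. Let $f(z,j)=(e^{2\pi\mathrm{i}\alpha}z,1-j)$ with $\alpha\notin\Q$. This $f$ is a surjective isometry, with $f^{2k}(z,j)=(e^{4\pi\mathrm{i}k\alpha}z,j)$ and $f^{2k+1}(z,j)=(e^{4\pi\mathrm{i}k\alpha}e^{2\pi\mathrm{i}\alpha}z,1-j)$. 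Since $2\alpha\notin\Q$, every orbit is dense in $M$. All the hypotheses and all your reductions hold, yet $f$ has no non-trivial closed invariant subset, so there is no contradiction to derive. This is also why the argument of Proposition~\ref{Prop-Connected-Comp-fhat} needed infinitely many components: here the two components visited by an orbit already exhaust $M$.

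The missing idea is to work in $\F(M,e)$ instead of looking for invariant subsets of $M$. Choose $x_1\neq x_2$ in the component $C$ and put $\mu:=\delta(x_1)-\delta(x_2)\neq 0$. The basepoint terms cancel, so $T^n_{f,e}\mu=\delta(f^n(x_1))-\delta(f^n(x_2))$. The points $f^n(x_1)$ and $f^n(x_2)$ lie in a common connected component. Since $S$ and $M\setminus S$ are clopen, they are both in $S$ or both in $M\setminus S$. Let $\phi$ be the indicator function of whichever of $S$, $M\setminus S$ does not contain $e$. It is $\rho^{-1}$-Lipschitz with $\rho=\mathrm{dist}(S,M\setminus S)$, vanishes at $e$, is non-zero, and satisfies $\langle \phi, T^n_{f,e}\mu\rangle=0$ for every $n\geq 0$. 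Hence $\overline{\lspan}\,\Orb(\mu,T_{f,e})\subset\ker\phi$ is a non-trivial invariant subspace. In the example above this works even though $f$ is minimal: the invariant subspace does not come from any invariant subset of $M$, as in Proposition~\ref{Prop: ISP2}, which is what your approach was implicitly searching for.
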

	
	\begin{proof}
		Let $C$ be the connected component with at least $2$ points and fix ${\{x_1,x_2\}\in C}$, with $x_1\neq x_2$.
		Set $\mu\in \mathcal{F}(M,e)$ as the vector defined by
		\[\mu:=\delta(x_1)-\delta(x_2).\]
		Thus, for any $n\in\N$, we have $T^n_{f,e}(\mu):=\delta(f^n(x_1))-\delta(f^n(x_2))$.
		Thanks to the continuity of $f$, we deduce that, for any $n\geq 0$, $f^n(x_1)$ and $f^n(x_2)$ belong to the same connected component. Since $\mathrm{dist}(S,M\setminus S)>0$, both $S$ and $M\setminus S$ are clopen, and hence every connected component of $M$ is contained in one of these two sets.
		Consider now $\phi:M\to \K$ be the function defined as $\phi(S)=\{0\}$ and $\phi (M\setminus S)=\{1\}$ if $e\in S$, or $\phi(S)=\{1\}$ and $\phi (M\setminus S)=\{0\}$ if $e\in M\setminus S$. 
		Since $\mathrm{dist}(S,M\setminus S)>0$, $\phi\in \mathrm{Lip}_0(M)$. 
		Moreover, observe that 
		\[\phi(T^n_{f,e}(\mu))=0,\quad\text{for all }n\geq 0.\]
		Thus, $E:=\overline{\mathrm{span}}\,\mathrm{Orb}(\mu,T_{f,e})\subset \ker(\phi)$ is a nontrivial invariant subspace for $T_{f,e}$.
	\end{proof}
	
	\begin{corollary}\label{cor:compact-components-T}
		Let $K$ be an infinite compact metric space with at least two connected components. Assume that one connected component has at least two points. Then, $T_{f,e}$ admits a nontrivial invariant subspace for any Lipschitz map ${f:K\to K}$. 
	\end{corollary}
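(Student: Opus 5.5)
The plan is to deduce the corollary from Proposition~\ref{prop: slightly noncompact}. That proposition requires three things: $M$ infinite and complete, at least two connected components with one of them containing at least two points, and a closed set $S\subsetneq M$ with $\mathrm{dist}(S,K\setminus S)>0$. Compactness gives completeness, and the other hypotheses are assumed. So the only work is to produce the set $S$ from compactness together with disconnectedness.

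Since $K$ has at least two connected components, it is not connected. Hence there is a separation $K=S\cup (K\setminus S)$ with $S$ non-empty, proper, and clopen. Concretely, I would pick two points $p,q$ in distinct components. Because components of a compact Hausdorff space coincide with quasi-components, there is a clopen set $S$ containing $p$ but not $q$. In fact any nontrivial clopen set suffices, and one exists simply because $K$ is disconnected.

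Then $S$ and $K\setminus S$ are both closed, hence compact, non-empty, and disjoint. The distance function $(x,y)\mapsto d(x,y)$ is continuous and strictly positive on the compact set $S\times (K\setminus S)$, so it attains a positive minimum. This gives $\mathrm{dist}(S,K\setminus S)>0$, and $S\subsetneq K$ is closed.

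All hypotheses of Proposition~\ref{prop: slightly noncompact} are now satisfied, so $T_{f,e}$ has a non-trivial invariant subspace for every Lipschitz $f\colon K\to K$ and every $e\in K$. There is no real obstacle here. The only point to check is that disconnectedness gives a clopen proper non-empty set, which is immediate from the definition of connectedness and needs no quasi-component argument.
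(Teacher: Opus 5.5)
Your proposal is correct and follows the paper's proof essentially verbatim. Disconnectedness of $K$ yields a non-empty proper clopen set $S$, compactness of $S$ and $K\setminus S$ gives $\mathrm{dist}(S,K\setminus S)>0$, and Proposition~\ref{prop: slightly noncompact} then applies; as you note, the quasi-component argument is unnecessary, since any non-trivial clopen set works.
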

	
	\begin{proof}
		Let $C\subset K$ be the connected component with at least two points. 
		Since $K$ is not connected, there is a nonempty clopen set $S\subsetneq K$. By compactness of $K$, $\mathrm{dist}(S,K\setminus S)>0$. Thus, Proposition~\ref{prop: slightly noncompact} applies.
	\end{proof}
	
	The results Lemma~\ref{Prop:IsolatedT}, Proposition~\ref{Prop:InfiniteConnectedComponent}, Proposition~\ref{prop: slightly noncompact} and
	Corollary~\ref{cor:compact-components-T} point towards the idea that the operators $T_{f,e}$ always admit a non-trivial invariant subspace whenever the ambient space $M$ is disconnected. However, there are still some remaining open cases in this direction, for instance: 
	$M$ is compact and every connected component is a singleton or
	$M$ is noncompact and it cannot be decomposed in two clopen sets in the sense of Proposition~\ref{prop: slightly noncompact} (therefore, there is no nontrivial piecewise constant Lipschitz function).

	\section{Final remarks and linear dynamics}
	
	The invariant subspace problem is closely related to questions from linear dynamics. Indeed, a bounded operator $T:X\to X$ has no non-trivial invariant subspace precisely when every non-zero vector is cyclic for $T$. Thus, although the invariant subspace problem is not itself a problem about hypercyclicity, it naturally leads one to study the size and structure of linear orbits. Conversely, linear dynamics provides a useful language for comparing the metric dynamics of a map $f:M\to M$ with the linear dynamics of the induced operators $\widehat f$ and $T_{f,e}$.
	
	In this final section, we collect a few observations in this direction. Our starting point is \cite[Proposition~2.7]{ACP20}, which shows that, for $f\in \Lip_0(M,M)$, the orbit $\Orb(x,f)$ is dense in $M$ if and only if $\delta(x)$ is a cyclic vector for $\widehat f$. 
	It is therefore natural to ask to what extent \cite[Proposition~2.7]{ACP20} extends from free linearizations $\widehat f$ to the more general operators $T_{f,e}$.
	
	The defining formula for $T_{f,e}$ already suggests that the situation is more delicate. Namely,
	$$
	T_{f,e}^n(\delta(x))=\delta(f^n(x))-\delta(f^n(e)),
	\qquad \forall n\in \N.
	$$
	Thus the orbit of $\delta(x)$ under $T_{f,e}$ is governed simultaneously by the two metric orbits $\Orb(x,f)$ and $\Orb(e,f)$. This should be contrasted with the case of $\widehat f$, where the orbit of $\delta(x)$ depends only on $\Orb(x,f)$.
	
	Let us first record two elementary observations. If $\delta(x)$ is cyclic for $T_{f,e}$, then $\Orb(x,f)\cup\Orb(e,f)$ must be dense in $M$. Indeed,
	$$
	\Orb(\delta(x),T_{f,e})
	\subset
	\overline{\lspan}\,\delta\bigl(\Orb(x,f)\cup\Orb(e,f)\bigr).
	$$
	On the other hand, $\delta(x)$ may be cyclic for $T_{f,e}$ even when $\Orb(x,f)$ is not dense. The following finite example illustrates this phenomenon.
	
	\begin{example}
		Let $M:=\{0,1,\ldots,N\}\subset \mathbb N\cup\{0\}$, endowed with the usual metric, and define $f:M\to M$ by $f(0)=0$, $f(N)=0$, and $f(n)=n+1$ for $1\leq n\leq N-1$.
		Choosing $e=1$, we have $T^0_{f,1}(\delta(0))=\delta(0)$ and, for $1\leq n\leq N-1$,
		$$
		T^n_{f,1}(\delta(0))
		=
		\delta(f^n(0))-\delta(f^n(1))
		=
		\delta(0)-\delta(n+1).
		$$
		Moreover, $T^N_{f,1}(\delta(0))=0$. Therefore
		$$
		\Orb(\delta(0),T_{f,1})
		=
		\{0,\delta(0),\delta(0)-\delta(2),\ldots,\delta(0)-\delta(N)\}.
		$$
		It follows that
		$$
		\mathrm{span}\Orb(\delta(0),T_{f,1})=\F(M,1).
		$$
		Thus $\delta(0)$ is a cyclic vector for $T_{f,1}$, although $\Orb(0,f)=\{0\}$ is not dense in $M$.
	\end{example}
	
	We now give a second example, this time on the circle, where the cyclicity of the vectors $\delta(x)$ can be completely described. Let $\mathbb T=\mathbb R/\mathbb Z$ be equipped with its usual arclength metric. By symmetry, we take $0$ as the basepoint. Recall that if $f$ is an irrational rotation of $\mathbb T$, then every orbit $\Orb(x,f)$ is dense in $\mathbb T$. Nevertheless, not every vector $\delta(x)$ is cyclic for $T_{f,0}$.
	We shall use the following simple consequence of the Hahn--Banach theorem.
	
	\begin{lemma}\label{lemma:delta_x cyclic}
		Let $x$ be an element of a pointed metric space $(M,e)$, and let $f\in \Lip(M,M)$. Then the following assertions are equivalent:
		\begin{enumerate}[$(i)$]
			\item $\delta(x)$ is not a cyclic vector for $T_{f,e}$;
			\item there exists $\varphi\in \Lip_0(M,e)\setminus\{0\}$ such that
			$$
			\langle \varphi,\gamma\rangle=0,
			\qquad \forall \gamma\in \lspan\Orb(\delta(x),T_{f,e});
			$$
			\item there exists $\varphi\in \Lip_0(M,e)\setminus\{0\}$ such that
			$$
			\varphi(f^n(x))=\varphi(f^n(e)),
			\qquad \forall n\in \N\cup\{0\}.
			$$
		\end{enumerate}
	\end{lemma}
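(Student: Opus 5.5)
The plan is to prove the cycle of implications $(i)\Rightarrow(ii)\Rightarrow(iii)\Rightarrow(i)$, using the duality $\Lip_0(M,e)\equiv\F(M,e)^*$ together with the formula $T_{f,e}^n(\delta(x))=\delta(f^n(x))-\delta(f^n(e))$ from the remark following the definition of $T_{f,e}$.

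For $(i)\Rightarrow(ii)$, I will set $E:=\overline{\lspan}\,\Orb(\delta(x),T_{f,e})$. If $\delta(x)$ is not cyclic, then $E$ is a proper closed subspace of $\F(M,e)$. The Hahn--Banach separation theorem then gives a non-zero functional on $\F(M,e)$ that vanishes on $E$. Through the identification $\F(M,e)^*\equiv\Lip_0(M,e)$, this functional is some $\varphi\in\Lip_0(M,e)\setminus\{0\}$. In particular $\varphi$ vanishes on $\lspan\Orb(\delta(x),T_{f,e})$.

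For $(ii)\Rightarrow(iii)$, I will test $\varphi$ against the vectors $\gamma=T_{f,e}^n(\delta(x))$ for $n\ge0$. Since $\langle\varphi,\delta(y)\rangle=\varphi(y)$, this gives $\varphi(f^n(x))-\varphi(f^n(e))=0$ for every $n\ge0$. For $n=0$ this uses $\varphi(e)=0$, and it is also consistent with $T^0=\mathrm{Id}$.

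For $(iii)\Rightarrow(i)$, the same computation read backwards shows that $\varphi$ vanishes on each $T_{f,e}^n(\delta(x))$. By linearity it vanishes on their span, and by continuity it vanishes on the closure $E$. Since $\varphi\neq0$ as a functional on $\F(M,e)$, it cannot vanish on all of $\F(M,e)$, because $\F(M,e)$ norms $\Lip_0(M,e)$. Hence $E\neq\F(M,e)$, and $\delta(x)$ is not cyclic.

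None of these steps is a real obstacle. The only point needing care is the bookkeeping of the duality pairing (that $\varphi$ is exactly the Hahn--Banach functional) and the $n=0$ term.
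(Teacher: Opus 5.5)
Your proposal is correct and matches the paper's approach. The paper gives no written proof and only calls the lemma a simple consequence of the Hahn--Banach theorem, which is what you carry out: Hahn--Banach with the duality $\F(M,e)^*\equiv\Lip_0(M,e)$, and testing against $T_{f,e}^n(\delta(x))=\delta(f^n(x))-\delta(f^n(e))$, with the $n=0$ case covered by $\varphi(e)=0$.
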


	\begin{proposition}\label{prop:cyclic}
		Let $f:t\in \mathbb T\mapsto t+r\in \mathbb T$ be an irrational rotation, and let $s\in \mathbb T$. Then $\delta(s)$ is a cyclic vector for $T_{f,0}$ if and only if $s$ is irrational.
	\end{proposition}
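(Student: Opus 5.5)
The plan is to use the Hahn--Banach characterization of Lemma~\ref{lemma:delta_x cyclic} and reduce cyclicity of $\delta(s)$ to a statement about Lipschitz functions on $\mathbb T$ that are invariant under translation by $s$. Here the basepoint is $e=0$, and $f^n(0)=nr$, $f^n(s)=s+nr$. By Lemma~\ref{lemma:delta_x cyclic}$(iii)$, $\delta(s)$ fails to be cyclic for $T_{f,0}$ exactly when there is a non-zero $\varphi\in \Lip_0(\mathbb T,0)$ such that
$$
\varphi(s+nr)=\varphi(nr),\qquad \forall n\in\N\cup\{0\}.
$$

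\emph{Step 1: from the orbit to all of $\mathbb T$.} Since $r$ is irrational, the forward orbit $\{nr:n\geq 0\}$ is dense in $\mathbb T$. The function $t\mapsto \varphi(t+s)-\varphi(t)$ is continuous and vanishes on this dense set, so it vanishes everywhere. Hence the condition above is equivalent to
$$
\varphi(t+s)=\varphi(t)\quad\text{for all } t\in\mathbb T.
$$
Consequently, $\delta(s)$ is \emph{not} cyclic if and only if there is a non-zero Lipschitz function on $\mathbb T$ that vanishes at $0$ and is invariant under translation by $s$.

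\emph{Step 2: $s$ rational gives non-cyclic.} Write $s=p/q$ with $q\geq 1$. This includes $s=0$, where $\delta(0)=0$ is trivially not cyclic. Take
$$
\varphi(t)=|\sin(\pi q t)|.
$$
It is well defined on $\mathbb R/\mathbb Z$, it is Lipschitz, and $\varphi(0)=0$. It is non-zero, for example $\varphi(1/(2q))=1$, and it has period $1/q$, so it is invariant under translation by $p/q$. This works over both $\K=\R$ and $\K=\C$.

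\emph{Step 3: $s$ irrational gives cyclic.} Suppose $\varphi(t+s)=\varphi(t)$ for all $t$. Iterating in both directions gives $\varphi(t+ks)=\varphi(t)$ for every $k\in\mathbb Z$. Taking $t=0$, $\varphi$ equals $\varphi(0)=0$ on $\{ks:k\in\mathbb Z\}$, which is dense because $s$ is irrational. By continuity $\varphi\equiv 0$, so no non-zero $\varphi$ exists and $\delta(s)$ is cyclic.

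There is no serious obstacle in this argument. The only point requiring care is Step 1, which passes from the one-sided orbit condition (indices $n\geq 0$ only) to invariance on the whole circle. This relies on density of the \emph{forward} orbit of an irrational rotation, a standard fact already used in Example~\ref{example:TorusWith0}.
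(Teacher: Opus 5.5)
Your proof is correct. For rational $s$ it matches the paper's argument: your $|\sin(\pi q t)|$ and the paper's $\sin(2\pi q t)$ are interchangeable, both being non-zero $1/q$-periodic Lipschitz functions vanishing at $0$. For irrational $s$, however, you take a genuinely different route.

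The paper proves cyclicity there directly, by approximation. Given a target $x$, it picks $N$ with $Ns$ close to $x$ and telescopes $\delta(Ns)=\sum_{j=1}^N(\delta(js)-\delta((j-1)s))$. It then approximates each increment by an orbit vector $T_{f,0}^{n_j}(\delta(s))=\delta(s+n_jr)-\delta(n_jr)$ with $n_jr$ close to $(j-1)s$. This puts $\delta(x)$ within $\varepsilon$ of $\lspan\Orb(\delta(s),T_{f,0})$.

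You instead run both directions through Lemma~\ref{lemma:delta_x cyclic}$(iii)$. Density of the forward orbit $\{nr\}$ upgrades the annihilation condition to invariance of $\varphi$ under translation by $s$. Density of the multiples of $s$ then forces $\varphi\equiv 0$.

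What your version buys is brevity and a clean intermediate characterization. Namely, $\delta(s)$ is cyclic for $T_{f,0}$ if and only if every Lipschitz function on $\mathbb T$ that vanishes at $0$ and is invariant under translation by $s$ is zero. This makes it transparent that the answer depends only on whether the multiples of $s$ are dense in $\mathbb T$.

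What the paper's version buys is that it avoids the duality step in that direction. It also yields explicit approximants $\sum_j T_{f,0}^{n_j}(\delta(s))$, with the number of terms governed by how well multiples of $s$ approximate the target point.
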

	
	\begin{proof}
		Suppose first that $s$ is irrational. Let $x\in\mathbb T$ and let $\varepsilon>0$. Since the set $\{Ns:N\in\N\}$ is dense in $\mathbb T$, there exists $N\in\N$ such that, writing $w=Ns$, one has $d(w,x)<\varepsilon/2$.
		Moreover,
		$$
		\delta(w)
		=
		\sum_{j=1}^N\bigl(\delta(js)-\delta((j-1)s)\bigr).
		$$
		For each $j\in\{1,\dots,N\}$, since the orbit of $0$ under the irrational rotation $f$ is dense, we may choose $n_j\in\N$ such that
		$$
		d(n_jr,(j-1)s)<\frac{\varepsilon}{4N}.
		$$
		By translation-invariance of the metric on $\mathbb T$, this also gives
		$$
		d(s+n_jr,js)<\frac{\varepsilon}{4N}.
		$$
		Hence, for each $j\in\{1,\dots,N\}$,
		$$
		\begin{aligned}
			&\left\|\delta(js)-\delta((j-1)s)
			-\bigl(\delta(f^{n_j}(s))-\delta(f^{n_j}(0))\bigr)\right\| \\
			&\qquad\leq
			\|\delta(js)-\delta(f^{n_j}(s))\|
			+
			\|\delta((j-1)s)-\delta(f^{n_j}(0))\| \\
			&\qquad<
			\frac{\varepsilon}{2N}.
		\end{aligned}
		$$
		Now set $\gamma:=\sum_{j=1}^N T_{f,0}^{n_j}(\delta(s))$.
		Then $\gamma\in \lspan\Orb(\delta(s),T_{f,0})$ and
		$$
		\|\delta(w)-\gamma\|<\varepsilon/2.
		$$
		Therefore
		$$
		\|\delta(x)-\gamma\|
		\leq
		\|\delta(x)-\delta(w)\|+\|\delta(w)-\gamma\|
		<\varepsilon.
		$$
		Since $\overline{\lspan}\{\delta(x):x\in\mathbb T\}=\F(\mathbb T,0)$, we conclude that $\delta(s)$ is cyclic for $T_{f,0}$.
		\medskip
		
		Conversely, if $s=0$, then $\delta(s)=0$ and hence $\delta(s)$ is not cyclic. Assume now that $s$ is rational and non-zero. Write $s=p/q$ with $p,q\in\mathbb N$ and $\gcd(p,q)=1$. Let $\varphi\in \Lip_0(\mathbb T,0)$ be given by
		$$
		\varphi(t)=\sin(2\pi qt).
		$$
		Then, for every $n\in\N\cup\{0\}$,
		$$
		\begin{aligned}
			\varphi(f^n(s))-\varphi(f^n(0))
			&=
			\sin(2\pi q(s+nr))-\sin(2\pi qnr)\\
			&=
			\sin(2\pi p+2\pi qnr)-\sin(2\pi qnr)\\
			&=
			0.
		\end{aligned}
		$$
		By Lemma~\ref{lemma:delta_x cyclic}, $\delta(s)$ is not cyclic.
	\end{proof}
	
	The preceding examples show that, for arbitrary choices of $e$, \textit{neither} of the naive implications
	$$
	\Orb(x,f)\text{ is dense in }M
	\quad\Longrightarrow\quad
	\delta(x)\text{ is cyclic for }T_{f,e}
	$$
	and
	$$
	\delta(x)\text{ is cyclic for }T_{f,e}
	\quad\Longrightarrow\quad
	\Orb(x,f)\text{ is dense in }M
	$$
	holds in general. However, if the basepoint is chosen in a suitable way, then the two notions coincide.
	
	\begin{proposition}
		Let $f\in \Lip(M,M)$, let $x\in M$, and set $e:=f(x)$. Then $\Orb(x,f)$ is dense in $M$ if and only if $\delta(x)$ is a cyclic vector for $T_{f,e}$.
	\end{proposition}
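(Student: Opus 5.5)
With $e=f(x)$ the orbit of $\delta(x)$ has an especially simple form. For $n\ge 0$,
$$
T_{f,e}^n(\delta(x))=\delta(f^n(x))-\delta(f^n(e))=\delta(f^n(x))-\delta(f^{n+1}(x)).
$$
So the orbit consists of $\delta(x)$ (the case $n=0$, since $\delta(e)=0$) together with the consecutive differences along $\Orb(x,f)$. The plan is to compare the span of these vectors with $\lspan\,\delta(\Orb(x,f))$.

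\emph{Identification of the spans.} Telescoping gives, for each $N$,
$$
\delta(f^{N}(x))=\delta(x)-\sum_{n=0}^{N-1}T_{f,e}^n(\delta(x)).
$$
Hence every $\delta(f^N(x))$ lies in $\lspan\Orb(\delta(x),T_{f,e})$. Conversely, each orbit element is a difference of two such deltas. Therefore
$$
\lspan\Orb(\delta(x),T_{f,e})=\lspan\{\delta(y):y\in\Orb(x,f)\}.
$$
The basepoint $e=f(x)$ is itself in $\Orb(x,f)$, which is consistent with $\delta(e)=0$.

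\emph{Dense orbit implies cyclic.} Suppose $\Orb(x,f)$ is dense in $M$. Because $\delta$ is an isometry, $\overline{\lspan}\,\delta(\Orb(x,f))$ contains $\delta(y)$ for every $y\in M$. It therefore equals $\F(M,e)$, and $\delta(x)$ is cyclic.

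\emph{Cyclic implies dense orbit.} This is the step that needs care. Suppose $A:=\overline{\Orb(x,f)}\neq M$ and pick $z\in M\setminus A$. I would use the function $\varphi(y):=d(y,A)$, as in Proposition~\ref{Prop: ISP2}. Since $e=f(x)\in A$, we have $\varphi\in\Lip_0(M,e)$. The function $\varphi$ vanishes on $A$, so it annihilates $\lspan\,\delta(\Orb(x,f))$; equivalently, condition (iii) of Lemma~\ref{lemma:delta_x cyclic} holds. On the other hand $\varphi(z)=d(z,A)>0$, so $\varphi\neq0$. By Lemma~\ref{lemma:delta_x cyclic}, $\delta(x)$ is not cyclic.

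The only real subtlety is this converse direction. Normally $\varphi$ has to be normalized at the basepoint, and that normalization is exactly what breaks in general, as the irrational-rotation example shows. Here the choice $e=f(x)\in A$ makes the normalization automatic. No degenerate cases arise: if $M$ is a single point, both sides of the equivalence hold trivially.
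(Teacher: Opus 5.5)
Your proof is correct and is essentially the paper's argument. Your telescoping identity $\lspan\Orb(\delta(x),T_{f,e})=\lspan\,\delta(\Orb(x,f))$ is the primal form of the paper's observation that any annihilating $\varphi\in\Lip_0(M,e)$ must be constant on $\Orb(x,f)$. Your function $d(\cdot,A)$ just makes explicit why $\F(\overline{\Orb(x,f)},e)\neq\F(M,e)$ in the paper's converse.

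One small correction to your closing remark. The irrational-rotation example shows that the \emph{forward} implication (dense orbit $\Rightarrow$ cyclic) fails for general $e$. The failure of the basepoint normalization in the converse is instead illustrated by the finite example on $\{0,1,\ldots,N\}$, where $e=1\notin\overline{\Orb(0,f)}$.
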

	
	\begin{proof}
		Suppose first that $\Orb(x,f)$ is dense in $M$. Let $\varphi\in \Lip_0(M,e)$ be such that
		$$
		\varphi(f^n(x))=\varphi(f^n(e)),
		\qquad \forall n\in\N\cup\{0\}.
		$$
		Since $e=f(x)$, this means that
		$$
		\varphi(f^n(x))=\varphi(f^{n+1}(x)),
		\qquad \forall n\in\N\cup\{0\}.
		$$
		Hence $\varphi$ is constant on $\Orb(x,f)$. Since this orbit is dense in $M$, $\varphi$ is constant on $M$. As $\varphi(e)=0$, we get $\varphi=0$. By Lemma~\ref{lemma:delta_x cyclic}, $\delta(x)$ is cyclic for $T_{f,e}$.
		\smallskip
		
		Conversely, assume that $\delta(x)$ is cyclic for $T_{f,e}$. Since $e=f(x)$, one has
		$$
		\overline{\lspan}\Orb(\delta(x),T_{f,e})
		=
		\overline{\lspan}\{\delta(f^n(x))-\delta(f^{n+1}(x)):n\geq 0\}
		\subset
		\F\bigl(\overline{\Orb(x,f)},e\bigr).
		$$
		Therefore, cyclicity of $\delta(x)$ implies
		$$
		\F(M,e)=\F\bigl(\overline{\Orb(x,f)},e\bigr).
		$$
		This is possible only if $\overline{\Orb(x,f)}=M$. Hence $\Orb(x,f)$ is dense in $M$.
	\end{proof}
	
	\begin{remark}
		The proof of the converse implication works more generally whenever $e\in\Orb(x,f)\setminus \{x\}$. However, the direct implication may fail for other choices of $e\in\Orb(x,f)$. For instance, suppose that $M$ is the union of two disjoint subsets $M_1$ and $M_2$ at positive distance from each other, and that $f$ alternates between them. If both $x$ and $e$ belong to $M_1$, then the Lipschitz function $\varphi$ defined by
		$$
		\varphi|_{M_1}=0,
		\qquad
		\varphi|_{M_2}=1
		$$
		satisfies $\varphi(f^n(x))=\varphi(f^n(e))$ for every $n\in\N\cup\{0\}$. Thus, by Lemma~\ref{lemma:delta_x cyclic}, $\delta(x)$ is not cyclic for $T_{f,e}$, even though the orbit of $x$ under $f$ may be dense in $M$. Note also that if, for every $k\in\N$, the subsequence
		$\{f^{nk}(x):n\geq 0\}$ is dense in $M$, then $\delta(x)$ is a cyclic vector for $T_{f,e}$ for every $e\in\Orb(x,f)\setminus\{x\}$. This condition is satisfied when $T$ is a hypercyclic linear operator and $x$ is a hypercyclic vector for~$T$ (Ansari’s Theorem).
	\end{remark}
	
	A continuous map $f:M\to M$ is said to be \emph{topologically transitive} if, for every pair of non-empty open sets $U,V\subset M$, there exists $n\in\N\cup\{0\}$ such that
	$$
	f^n(U)\cap V\neq\emptyset.
	$$
	Recall that, on a separable Banach space, a bounded operator $T:X\to X$ is hypercyclic if and only if it is topologically transitive; see, for instance, \cite[Proposition~1.15]{GEPM}. A stronger notion is weak mixing: a continuous map $f:M\to M$ is \emph{weakly mixing} if $f\times f$ is topologically transitive on $M\times M$.
	
	In \cite{DR}, De La Rosa and Read proved that there exist hypercyclic operators which are not weakly mixing. The next proposition shows that, when $M$ is bounded, such operators cannot arise as $T_{f,e}$.
	
	\begin{proposition}
		Let $M$ be a bounded metric space, let $f:M\to M$ be Lipschitz, and let $e\in M$. Then $T_{f,e}$ is hypercyclic if and only if it is weakly mixing.
	\end{proposition}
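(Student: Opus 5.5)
The key observation is that boundedness of $M$ gives a dense subspace of vectors with uniformly bounded orbits. For $x\in M$ and $n\geq 0$ we have $T_{f,e}^n(\delta(x))=\delta(f^n(x))-\delta(f^n(e))$, hence
$$
\|T_{f,e}^n(\delta(x))\|=d(f^n(x),f^n(e))\leq \operatorname{diam}(M).
$$
By linearity, every vector of $D:=\lspan\{\delta(x):x\in M\}$ has a bounded orbit, and $D$ is dense in $\F(M,e)$. Moreover $D$ is a subspace, so for $y\in D$ with $\sup_n\|T^n_{f,e}y\|\leq C_y$ we get $\sup_n\|T^n_{f,e}(ty)\|\leq |t|C_y$. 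Weak mixing trivially implies hypercyclicity on a separable Banach space, so only the direct implication requires an argument.

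For the direct implication I would use the characterization of weak mixing for hypercyclic operators from \cite{GEPM}: $T$ is weakly mixing if and only if, for every non-empty open set $U$ and every neighbourhood $W$ of $0$, there is $n$ with $T^n(U)\cap W\neq\emptyset$ and $T^n(W)\cap U\neq\emptyset$. I would fix such $U$ and $W\supset B(0,\varepsilon)$ and pick $y\in U\cap D$ with $B(y,r)\subset U$ and orbit bound $C$. The aim is to produce a single $n$ together with $a\in U$ satisfying $\|T^n a\|<\varepsilon$ and $b\in W$ satisfying $T^n b\in U$. The natural candidates are built from one hypercyclic vector $z$ (hypercyclic vectors are dense) and from scalar multiples of $y$, which have orbits of controlled size. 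Concretely, I would take $z$ hypercyclic with $\|z\|$ tiny and $n$ with $T^n z\approx y$. Then I would try to combine $z$ with $y$ (for instance $a=y+\lambda z$ and $b=\mu z$) so that the bounded orbit of $y$ absorbs the error terms.

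The main obstacle is exactly this simultaneous choice. The term $T^n y$ is only known to be bounded by $C$, not small, so a single orbit of $z$ must be arranged to cancel it. The first thing I would try is to exploit hypercyclicity of $z$ twice: first choose $m$ with $T^m z$ close to a suitable vector of $D$, then choose $n>m$, using that the bounded orbit of the $D$-vector keeps $T^{n-m}$ of the error of size at most $C\cdot\|T^m z-y'\|$, which can be made small. If this direct route does not close, the fallback is to verify the Hypercyclicity Criterion along a subsequence. One would take $X_0=D$ (bounded orbits) together with a dense set $Y_0$ of vectors of the form $T^m z$, and use the boundedness of orbits on $D$, together with the scaling $ty$, in place of the usual requirement $T^{n_k}x\to 0$. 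By the Bès--Peris theorem, satisfying the criterion is equivalent to weak mixing.
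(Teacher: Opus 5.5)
Your preliminary reduction matches the paper's. Every vector of $\lspan\delta(M)$ has a bounded orbit, this subspace is dense, and weak mixing trivially gives hypercyclicity. The paper then finishes in one line by quoting \cite[Theorem~2.48]{GEPM}: a hypercyclic operator with a dense set of vectors having bounded orbits is weakly mixing. Your proposal neither invokes this result nor proves it, and the routes you sketch do not work:
\begin{itemize}
\item With $a=y+\lambda z$, $b=\mu z$ and $T^nz\approx y$, you get $T^na\approx T^ny+\lambda y$. Nothing makes this small, because $T^ny$ is an uncontrolled point of the ball of radius $C$. Any combination of $y$ and one auxiliary vector, evaluated at a single time, runs into this.
\item The estimate $\|T^{n-m}(T^mz-y')\|\le C\|T^mz-y'\|$ is false. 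The orbit bound of $y'$ says nothing about the orbit of the error vector $T^mz-y'$; indeed $\sup_k\|T^k\|=\infty$ for every hypercyclic operator.
\item The Hypercyclicity Criterion needs $T^{n_k}x\to 0$ on a dense set, and boundedness is not a substitute. So B\`es--Peris cannot be applied unless you prove a modified criterion, which is the entire missing step.
\end{itemize}

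The gap can be closed with the characterization you quoted, provided the choices are made in the right order. Write $T=T_{f,e}$. Take $y\in U\cap\lspan\delta(M)$ with $\sup_j\|T^jy\|\le C$, then $\varepsilon>0$ with $B(y,\varepsilon)\subset U$ and $B(0,\varepsilon)\subset W$, and $\theta>0$ with $\theta(2C+1)<\varepsilon$ and $3\theta C<\varepsilon$.
\begin{enumerate}
\item By transitivity, first fix $k$ with $T^k(B(0,\theta\varepsilon))\cap B(y,2\varepsilon/3)\neq\emptyset$.
\item Only then pick a hypercyclic $x\in B(y,\varepsilon)$ with $\|T^kx\|<C+1$. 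This is possible because the set is open, contains $y$, and hypercyclic vectors are dense.
\item Finally pick $n$ with $\|T^nx\|<\varepsilon$ and $\|\theta T^{n+k}x-y\|<2\varepsilon/3$. These conditions on $T^nx$ define a non-empty open set, by the choice of $k$.
\end{enumerate}
Set $a:=x$ and $b:=\theta T^k(x-y)$. Then $a\in U$ and $T^na\in W$. Also $\|b\|<\theta(2C+1)<\varepsilon$ and $\|T^nb-y\|\le\|\theta T^{n+k}x-y\|+\theta\|T^{n+k}y\|<\varepsilon$. Hence $T^n(U)\cap W\neq\emptyset\neq T^n(W)\cap U$, and $T$ is weakly mixing.
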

	
	\begin{proof}
		Every weakly mixing operator is hypercyclic, so we only prove the converse. Let
		$$
		X_0:=\lspan \delta(M).
		$$
		By definition of the Lipschitz-free space, $X_0$ is dense in $\F(M,e)$. Moreover, for every $x\in M$ and every $n\in\N$, one has
		$$
		\|T^n_{f,e}(\delta(x))\|
		=
		\|\delta(f^n(x))-\delta(f^n(e))\|
		\leq
		\mathrm{diam}(M).
		$$
		It follows that every vector in $X_0$ has a bounded orbit under $T_{f,e}$. Hence, if $T_{f,e}$ is hypercyclic, then \cite[Theorem~2.48]{GEPM} implies that $T_{f,e}$ is weakly mixing.
	\end{proof}
	
	This suggests the following question.
	
	\begin{question}
		Is every hypercyclic operator of the form $T_{f,e}$ weakly mixing?
	\end{question}
	
	We now turn to the following elementary observation concerning non-hypercyclicity of certain free linearizations (not included in \cite{ACP20}).
	
	\begin{proposition}
		Let $M$ be a pointed metric space whose basepoint $0$ is isolated. Let $f\in \Lip_0(M,M)$ be such that $f^{-1}(0)=\{0\}$. Then $\widehat f$ is not hypercyclic.
		In particular, if the basepoint is isolated, then there are no injective hypercyclic free linearizations on $\F(M)$.
	\end{proposition}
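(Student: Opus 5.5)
The plan is to exhibit a non-zero functional in $\Lip_0(M)\equiv\F(M)^*$ that is fixed by the adjoint $\widehat f^{\,*}$. The adjoint acts as the composition operator $\varphi\mapsto\varphi\circ f$. Such a functional is constant along every orbit of $\widehat f$, and this is incompatible with the orbit being dense.

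Since $0$ is isolated, there is $r>0$ with $d(x,0)\geq r$ for every $x\neq 0$. We may assume $M\neq\{0\}$, since otherwise $\F(M)=\{0\}$ and there is nothing to prove. Define $\varphi\colon M\to\K$ by $\varphi(0)=0$ and $\varphi(x)=1$ for $x\neq 0$.

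\textbf{Step 1: $\varphi\in\Lip_0(M)\setminus\{0\}$.} If $x,y$ are both non-zero, then $|\varphi(x)-\varphi(y)|=0$. If exactly one of them is $0$, the difference is $1\leq d(x,y)/r$. Hence $\Lip(\varphi)\leq 1/r$, and $\varphi\neq 0$ because $M\neq\{0\}$.

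\textbf{Step 2: $\varphi\circ f=\varphi$.} The hypothesis $f^{-1}(0)=\{0\}$ says exactly that $f(x)\neq 0$ if and only if $x\neq 0$. Therefore $\varphi(f(x))=\varphi(x)$ for all $x\in M$. Consequently, for every $x\in M$,
$$\langle \widehat f(\delta(x)),\varphi\rangle=\varphi(f(x))=\varphi(x)=\langle\delta(x),\varphi\rangle .$$
By linearity and density this gives $\langle\widehat f\gamma,\varphi\rangle=\langle\gamma,\varphi\rangle$ for all $\gamma\in\F(M)$, that is, $\widehat f^{\,*}\varphi=\varphi$.

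\textbf{Step 3: $\widehat f$ is not hypercyclic.} Fix any $\gamma\in\F(M)$. By Step 2, $\langle\widehat f^{\,n}\gamma,\varphi\rangle=\langle\gamma,\varphi\rangle$ for all $n$. Hence $\Orb(\gamma,\widehat f)$ lies in the closed affine hyperplane $\{\xi:\langle\xi,\varphi\rangle=\langle\gamma,\varphi\rangle\}$. This hyperplane is proper because $\varphi\neq 0$, so the orbit is not dense. Since $\gamma$ was arbitrary, $\widehat f$ has no hypercyclic vector. Equivalently, one can invoke the standard fact that the adjoint of a hypercyclic operator has empty point spectrum.

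\textbf{The ``in particular'' clause.} If $f\in\Lip_0(M,M)$ is injective, then $f(0)=0$ forces $f^{-1}(0)=\{0\}$, and the first part applies.

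The only point needing care is Step 1, namely that the indicator of $M\setminus\{0\}$ is Lipschitz. This is exactly where isolatedness of the basepoint is used; everything else is a formal consequence of $f^{-1}(0)=\{0\}$.
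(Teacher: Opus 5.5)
Your proof is correct and is essentially the paper's argument. The paper uses $\varphi(y)=\min(R,d(y,0))$, which is $R$ times your indicator of $M\setminus\{0\}$. It then checks $\varphi\circ f=\varphi$ from $f^{-1}(0)=\{0\}$ and concludes that every orbit of $\widehat f$ lies in a proper closed affine hyperplane, exactly as you do.
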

	
	\begin{proof}
		Since $0$ is isolated, there exists $R>0$ such that $B(0,R)=\{0\}$. Define
		$$
		\varphi(y):=\min(R,d(y,0)),
		\qquad \forall y\in M.
		$$
		Then $\varphi\in \Lip_0(M)$ and $\varphi(y)=R$, $\forall y\in M\setminus\{0\}$.
		Since $f^{-1}(0)=\{0\}$, we have $f(y)\neq 0$ whenever $y\neq 0$. Hence $
		\varphi(f(y))=\varphi(y)$, $\forall y\in M$.
		It follows that
		$$
		\langle \varphi,\widehat f(\delta(y))\rangle
		=
		\langle \varphi,\delta(y)\rangle,
		\qquad \forall y\in M.
		$$
		By linearity and density, we obtain
		$$
		\langle \varphi,\widehat f(\mu)\rangle
		=
		\langle \varphi,\mu\rangle,
		\qquad \forall \mu\in \F(M).
		$$
		Therefore, for every $\mu\in \F(M)$ and every $n\in\N$,
		$$
		\langle \varphi,\widehat f^n(\mu)\rangle
		=
		\langle \varphi,\mu\rangle.
		$$
		Thus the orbit of $\mu$ is contained in the affine hyperplane
		$$
		\{\gamma\in \F(M):\langle \varphi,\gamma\rangle=\langle \varphi,\mu\rangle\}.
		$$
		Since $\varphi\neq 0$, this affine hyperplane is proper and closed. Hence no orbit can be dense in $\F(M)$, and $\widehat f$ is not hypercyclic.
	\end{proof}
	
	We turn to a straightforward extension of \cite[Theorem~2.3]{MuPe}. Recall that a continuous self-map $f:M\to M$ is said to be:
	\begin{itemize}
		\item \emph{mixing} if, for every pair of non-empty open sets $U,V\subset M$, there exists $N\in\N$ such that
		$$
		f^n(U)\cap V\neq\emptyset,
		\qquad \forall n\geq N;
		$$
		\item \emph{chaotic} if it is topologically transitive and its set of periodic points is dense in $M$.
	\end{itemize}
	It is immediate that
	$$
	\text{mixing } \implies \text{ weakly mixing } \implies \text{ topologically transitive}.
	$$
	For bounded linear operators on separable Banach spaces, one also has
	$$
	\text{Devaney chaotic } \implies \text{ weakly mixing } \implies \text{ hypercyclic};
	$$
	see, for instance, \cite{GEPM}.
	
	\begin{proposition}\label{Prop:WM}
		Let $M$ be an infinite metric space, let $f:M\to M$ be Lipschitz, and let $e\in M$.
		\begin{enumerate}[$(i)$]
			\item If $f$ is weakly mixing, then $T_{f,e}$ is weakly mixing.
			\item If $f$ is mixing, then $T_{f,e}$ is mixing.
			\item If $f$ is weakly mixing and chaotic, then $T_{f,e}$ is weakly mixing and chaotic.
		\end{enumerate}
	\end{proposition}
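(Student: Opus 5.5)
We will follow the strategy of \cite[Theorem~2.3]{MuPe}: use standard sufficient criteria for the linear properties (the Hypercyclicity/Weak Mixing Criterion, the Kitai-type criterion for mixing, and density of periodic points for chaos). In each case the criteria are checked on the dense subspace
$$
X_0:=\lspan\{\delta(x)-\delta(y):x,y\in M\}=\lspan\delta(M),
$$
which is dense in $\F(M,e)$ because $\delta(e)=0$. The guiding observation is that
$$
T_{f,e}^n(\delta(x)-\delta(y))=\delta(f^n(x))-\delta(f^n(y)).
$$
Thus the orbit of the ``molecule'' $\delta(x)-\delta(y)$ is the image under the Lipschitz map $(a,b)\mapsto \delta(a)-\delta(b)$ of the $f\times f$-orbit of $(x,y)$. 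The basepoint therefore plays no role on $X_0$, and the problem reduces to the dynamics of products of $f$.

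For $(i)$ and $(ii)$ we work with non-empty open sets $U,V\subset\F(M,e)$. By density we pick $u=\sum_{i=1}^k \lambda_i(\delta(x_i)-\delta(y_i))\in U$ and $v=\sum_{i=1}^k\lambda_i'(\delta(x_i')-\delta(y_i'))\in V$, padding so that both sums have the same length $k$. We then need an $n$ with $T^n_{f,e}(\tilde u)\in V$ for some $\tilde u\in U$, and this is the main obstacle. The coefficients $\lambda_i$ of $u$ cannot be converted into those of $v$ by moving points, since $T_{f,e}$ only moves points and never rescales molecules. To get around this, we first use a writing trick on $v$: we rewrite $v$ as a combination of molecules with the same coefficients as those of $u$. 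Concretely, we also include in $u$ the terms $\lambda_j'(\delta(z_j)-\delta(z_j))=0$, where $z_j$ are auxiliary points. Then $u$ carries the needed coefficients, and the matching terms of $v$ carry $0\cdot(\dots)$. After perturbing each degenerate molecule $\delta(z_j)-\delta(z_j)$ to $\delta(z_j)-\delta(z_j')$ with $z_j'$ close to $z_j$ (a small change in norm, since $\|\delta(a)-\delta(b)\|=d(a,b)$), both vectors become sums of the same length with identical coefficients. Moreover, each molecule of $u$ is the image of a point of $M^2$ lying in an arbitrarily small product of balls.

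Since $f$ is weakly mixing, Furstenberg's theorem gives that $f^{\times m}$ is topologically transitive on $M^{m}$ for every $m$. Applying this with $m=4k$ to the product of the small balls around all the points involved, we find a single $n$ that moves every point of $u$'s molecules within $\varepsilon$ of the corresponding points of $v$. Here the $v$-side points paired with the $0$-coefficient terms are arbitrary. Summing up, this yields $\tilde u\in U$ with $T_{f,e}^n\tilde u\in V$. The same argument run on $(T_{f,e}\oplus T_{f,e})$, using four open sets and again one product of copies of $f$, shows weak mixing. For $(ii)$ the product of mixing maps is mixing, so the same construction works for all $n\ge N$, giving mixing.

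For $(iii)$, weak mixing follows from $(i)$, so it remains to show that periodic points are dense. If $p,q$ are $f$-periodic, say with common period $N$, then $T^N_{f,e}(\delta(p)-\delta(q))=\delta(p)-\delta(q)$. Finite sums of such vectors are periodic with period the lcm of the individual periods. Since periodic points of $f$ are dense, any element of $X_0$, written as $\sum\lambda_i(\delta(x_i)-\delta(y_i))$, is approximated by $\sum\lambda_i(\delta(p_i)-\delta(q_i))$ with $p_i,q_i$ periodic and close to $x_i,y_i$. Hence periodic vectors of $T_{f,e}$ are dense in $X_0$, and thus in $\F(M,e)$. Together with hypercyclicity this gives chaos.
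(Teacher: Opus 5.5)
Your strategy is the paper's: work on the dense span of molecules, get one common $n$ from Furstenberg's theorem (or from mixing of finite products), and use molecules of periodic points for chaos. However, the step that actually carries the transitivity argument is misstated, and as written the construction does not give $T^n_{f,e}\tilde u\in V$.

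As you note yourself, $T_{f,e}^n$ never rescales. So the original part of $\tilde u$ is sent to $\sum_{i}\lambda_i\bigl(\delta(f^n\tilde x_i)-\delta(f^n\tilde y_i)\bigr)$. Its norm is controlled only by $\sum_i|\lambda_i|\,d(f^n\tilde x_i,f^n\tilde y_i)$. Hence the terms of $v$ paired with these molecules cannot be ``$0\cdot(\dots)$'' with arbitrary $v$-side points. If $\tilde x_i$ and $\tilde y_i$ are sent near unrelated points, then $T^n_{f,e}\tilde u$ is close to $v$ plus an uncontrolled term $\sum_i\lambda_i(\delta(a_i)-\delta(b_i))$, not close to $v$. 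Giving these terms coefficient $0$ also contradicts your own claim that the two padded sums have identical coefficients.

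The fix is what the paper means by pairing terms inside the same small neighbourhoods. Pad $v$ with degenerate molecules carrying the same non-zero coefficients:
\[
v=\sum_{i=1}^k\lambda_i\bigl(\delta(p_i)-\delta(p_i)\bigr)+\sum_{j=1}^k\lambda_j'\bigl(\delta(x_j')-\delta(y_j')\bigr),
\]
with arbitrary points $p_i$. Then apply transitivity of $f^{\times 4k}$ to the source set $\prod_i B(x_i,\varepsilon)\times B(y_i,\varepsilon)\times\prod_j B(z_j,\varepsilon)\times B(z_j,\varepsilon)$ and the target set $\prod_i B(p_i,\varepsilon)\times B(p_i,\varepsilon)\times\prod_j B(x_j',\varepsilon)\times B(y_j',\varepsilon)$. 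The resulting points give $\|\tilde u-u\|<2\varepsilon\sum_{i=1}^k(|\lambda_i|+|\lambda_i'|)$ and $\|T^n_{f,e}\tilde u-v\|<2\varepsilon\sum_{i=1}^k(|\lambda_i|+|\lambda_i'|)$. In words, the dipoles of $u$ collapse onto common points, while the tiny auxiliary dipoles are stretched into those of $v$.

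With this correction the rest of your proof goes through and coincides with the paper's. That includes the $T_{f,e}\oplus T_{f,e}$ version with four open sets, the mixing case via mixing of $f^{\times m}$, and the dense periodic points coming from molecules $\delta(p)-\delta(q)$ of $f$-periodic points.
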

	
	\begin{proof}
		The proof is the same as the proof of \cite[Theorem~2.3]{MuPe}, with the only difference that we use differences of Dirac measures in order to cancel the basepoint term appearing in the definition of $T_{f,e}$. We give the main details for completeness.
		
		First note that, since $M$ is infinite and $f$ is topologically transitive, $M$ has no isolated points. Indeed, if $M$ had an isolated point, topological transitivity would force $M$ to be finite. Hence $e$ is not isolated and
		$$
		D:=\lspan\{\delta(x)-\delta(y):x,y\in M\setminus\{e\}\}
		$$
		is dense in $\F(M,e)$.
		
		We prove the weak mixing case. Let $U_1,U_2,V_1,V_2$ be non-empty open subsets of $\F(M,e)$. Since $D$ is dense, we may choose elements
		$$
		\gamma_j=\sum_{i=1}^{p_j}\lambda_{j,i}\bigl(\delta(x_{j,i})-\delta(y_{j,i})\bigr)\in U_j,
		$$
		and
		$$
		\eta_j=\sum_{i=1}^{p_j}\lambda_{j,i}\bigl(\delta(x'_{j,i})-\delta(y'_{j,i})\bigr)\in V_j,
		$$
		for $j=1,2$, with all points different from $e$. By continuity of the map $x\mapsto\delta(x)$, we may choose small open neighbourhoods of these points, all contained in $M\setminus\{e\}$, so that replacing each point by a nearby one keeps the corresponding linear combinations inside $U_j$ or $V_j$.
		
		Since $f$ is weakly mixing, Furstenberg's theorem implies that every finite Cartesian power of $f$ is topologically transitive. We may therefore choose a common integer $n$ and points in the above neighbourhoods so that, after applying $f^n$, the first family lands near the target points defining $\eta_j$, while the auxiliary terms produced by the basepoint are paired inside the same small neighbourhoods. Using
		$$
		T_{f,e}^n(\delta(x)-\delta(y))
		=
		\delta(f^n(x))-\delta(f^n(y)),
		$$
		we obtain vectors $\Gamma_j\in U_j$ such that
		$$
		T_{f,e}^n(\Gamma_j)\in V_j,
		\qquad j=1,2.
		$$
		This proves that $T_{f,e}\oplus T_{f,e}$ is topologically transitive, hence that $T_{f,e}$ is weakly mixing.
		
		The proof of the mixing case is identical, replacing topological transitivity of finite Cartesian powers by the corresponding mixing property. Finally, if the periodic points of $f$ are dense in $M$, then
		$$
		\lspan\{\delta(x)-\delta(y):x,y\in \mathrm{Per}(f)\setminus\{e\}\}
		$$
		is dense in $\F(M,e)$. Moreover, if $x$ and $y$ are periodic points for $f$, then $\delta(x)-\delta(y)$ is periodic for $T_{f,e}$. Thus $T_{f,e}$ has dense periodic points. Combined with weak mixing, this gives chaos.
	\end{proof}
	
	We conclude with some remarks related to wild operators. The motivation and the arguments below are borrowed from \cite{Tapia}. Since the proofs are direct adaptations of those in \cite{Tapia}, we only recall the relevant definitions and indicate the necessary modifications.
	
	Let $T:X\to X$ be a bounded operator on a Banach space. We denote by $R_T$, $U_T$ and $A_T$ the sets of recurrent points, points with unbounded orbit, and points escaping to infinity:
	$$
	R_T:=\left\{x\in X:\liminf_{n\to\infty}\|T^n x-x\|=0\right\},
	$$
	$$
	U_T:=\left\{x\in X:\limsup_{n\to\infty}\|T^n x\|=\infty\right\},
	$$
	and
	$$
	A_T:=\left\{x\in X:\lim_{n\to\infty}\|T^n x\|=\infty\right\}.
	$$
	It is classical that $R_T$ and $U_T$ are $G_\delta$ subsets of $X$. Moreover, by the uniform boundedness principle, $U_T$ is either empty or dense. It is therefore natural to ask whether $A_T$ must also be either empty or dense. Augé in \cite{Auge} showed that this is not the case: on every infinite-dimensional separable Banach space, there exists a bounded operator $T$ such that $A_T$ is non-empty but not dense. More precisely, his examples satisfy
	$$
	X=A_T\cup R_T,
	\qquad
	\mathrm{int}(R_T)\neq\emptyset\neq\mathrm{int}(A_T).
	$$
	Following \cite{Tapia0, Tapia}, such an operator is called \emph{wild}. In \cite[Theorem~1.3]{Tapia} is proved that no operator of the form $\widehat f$ is wild. We record here that the same argument applies to the operators $T_{f,e}$.
	
	We shall use the following two facts, which are analogues of \cite[Proposition~3.2]{Tapia}. The only change is that the iterates of the generators are now given by
	$$
	T_{f,e}^n(\delta(x))
	=
	\delta(f^n(x))-\delta(f^n(e)).
	$$
	Thus, in all convergence arguments, the orbit of the basepoint $e$ has to be carried along together with the finitely many orbits already present.
	
	\begin{lemma}\label{convsubs}
		Let $(M,e)$ be a pointed metric space. Let $p\in\N$ and let $a_1,\ldots,a_p\in\K$ be such that $\sum_{i=1}^p a_i = 0$ and
		\begin{equation} \label{scalar property}
			\sum_{i\in J}a_i\neq 0,
			\qquad \forall J\subset\{1,\ldots,p\},\; \emptyset \neq J \subsetneq \{1,\ldots, p\}.
		\end{equation}
		For $1\leq k\leq p$, let $(x_k(n))_n$ be sequences in $M$, and set $\gamma_n:=\sum_{k=1}^p a_k\delta(x_k(n))$.
		If $(\gamma_n)_n$ has a weakly convergent subsequence with limit $\gamma=\sum_{k=1}^q b_k\delta(y_k) \neq 0$,
		where $1\leq q\leq p$ and $b_k\neq 0$, then, after passing to a further subsequence,
		\begin{enumerate}
			\item Each $y_k$ is the limit of at least one of the sequences $(x_m(n))_n$;
			\item If $(x_m(n))_n$ does not converge to some $y_k$, then it converges to $e$.
		\end{enumerate}
		In particular, the subsequence converge to $\gamma$ in norm.
	\end{lemma}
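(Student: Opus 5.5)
Our plan is to adapt the proof of \cite[Proposition~3.2]{Tapia}, using the Lipschitz test functions available in $\Lip_0(M,e)$ together with a compactness argument on the finitely many sequences $(x_k(n))_n$. First we pass to a subsequence with $\gamma_n\to\gamma$ weakly. By a diagonal argument we may further assume that, for each pair $k,m$, the distances $d(x_k(n),x_m(n))$, $d(x_k(n),e)$ and $d(x_k(n),y_j)$ converge in $[0,\infty]$. We then group the indices $\{1,\dots,p\}$ into clusters. Two indices $k,m$ lie in the same cluster when $d(x_k(n),x_m(n))\to 0$. Within a cluster that does not approach any $y_j$ or $e$, the points eventually stay at distance bounded below, say by $r>0$, from $\{y_1,\dots,y_q,e\}$ and from the other clusters. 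Here we must be careful, because limits may be $+\infty$ or the points may lack a convergent subsequence; we work only with distances, never with limits of points.

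The key step is to show that every cluster which does not converge to some $y_j$ or to $e$ leads to a contradiction. Consider such a cluster $J$, a proper non-empty subset since $\gamma\neq 0$ forces some index to approach a $y_j$. By \eqref{scalar property}, $c:=\sum_{i\in J}a_i\neq 0$. For each $n$, take the bump $\varphi_n(z):=\max(0,\rho-\min_{k\in J}d(z,x_k(n)))$ with $\rho<r/3$ fixed. This function is $1$-Lipschitz and vanishes at $e$, near the $y_j$'s, and at the points of the other clusters (for $n$ large, after ensuring intra-cluster diameters are $<\rho/2$). Then $\langle\varphi_n,\gamma_n\rangle\approx c\rho$. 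This is not directly a contradiction with weak convergence, since $\varphi_n$ varies with $n$. To fix this, pass to a further subsequence along which the bumps have pairwise disjoint supports; this is possible when the cluster escapes every compact region, and otherwise the cluster points form a Cauchy-type sequence. If the bumps are disjoint, $\Phi:=\sum_n \varepsilon_n\varphi_n$ with suitable signs is Lipschitz. Then $\langle\Phi,\gamma_n\rangle$ does not converge (alternating signs), contradicting weak convergence. This follows the standard argument that weakly convergent sequences of finitely supported elements cannot have mass escaping along separated supports. The case where a cluster has a convergent subsequence to some point $z\notin\{y_j,e\}$ is handled by a single fixed bump at $z$, which yields a nonzero coefficient $c$ at $z$ in the weak limit, contradicting the form of $\gamma$ (using also uniqueness of finitely supported representations). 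We expect this step to be the main obstacle, in particular the extraction of disjoint supports when clusters neither converge nor go to infinity. This requires invoking completeness or totally-bounded-versus-separated dichotomies: a sequence without Cauchy subsequence has a $\rho$-separated subsequence.

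Once every index converges either to some $y_j$ or to $e$, item (2) follows. Item (1) follows by testing $\gamma$ against a bump centered at $y_k$: if no sequence approached $y_k$, the pairing with $\gamma_n$ would eventually be $0$, while the pairing with $\gamma$ is $b_k\rho\neq 0$. Norm convergence then follows since $\|\gamma_n-\sum_k a_k\delta(\lim_n x_k(n))\|\le\sum_k|a_k|\,d(x_k(n),\lim_n x_k(n))\to 0$, and $\delta(e)=0$. The limit element equals $\gamma$ by uniqueness of weak limits.
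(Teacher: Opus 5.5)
Your route differs from the paper's, which does not argue by hand: it invokes \cite[Lemma~2.2]{ACP21} to locate the possible limits of the supports and \cite[Theorem~C]{ACP21} for norm convergence, using \eqref{scalar property} only to exclude cancellation at a point other than $e$. A self-contained bump-function proof is a reasonable substitute, and most of yours is sound. This includes the clustering and $c\neq 0$ from \eqref{scalar property}. It also includes the bump at $y_k$ for item (1), which also shows that some cluster approaches a $y_j$. The fixed bump at $z$ for a bad cluster with a convergent subsequence works too; this uses completeness, which the paper assumes throughout, or else pass to the completion. Finally, the norm estimate is fine.

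The gap is in the escaping case. Your bookkeeping only shows that $\varphi_n$ vanishes at $e$, near the $y_j$, and at the points $x_k(n)$, $k\notin J$, \emph{at the same time} $n$. Nothing prevents a point $x_k(n)$ of \emph{another bad cluster} from lying in the support of $\varphi_m$ with $m\neq n$. So in fact $\langle\Phi,\gamma_n\rangle=\varepsilon_n c\rho+\sum_{m\neq n}\varepsilon_m\sum_{k\notin J}a_k\varphi_m(x_k(n))$, and the cross terms can cancel the main term. Concretely, $a=(1,1,-2)$ satisfies the hypotheses. Take a $1$-separated sequence $(u_n)$ staying far from $e$ and $y$, and set $x_1(n)=u_n$, $x_2(n)=u_{n+1}$, $x_3(n)=y$. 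Then $\{1\}$ and $\{2\}$ are both bad clusters. For $J=\{1\}$ with $\varepsilon_n=(-1)^n$ one gets $\langle\Phi,\gamma_n\rangle=(-1)^n\rho+(-1)^{n+1}\rho=0$ for all $n$, so your test function yields no contradiction. ``Suitable signs'' happen to exist here, but your sketch gives no way of choosing them in general.

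The missing ingredient is a way to neutralise these cross terms, and the cleanest is the Schur property of $\ell_1$. The $\varphi_m$ are $1$-Lipschitz, vanish at $e$ and have pairwise disjoint supports, so $\sum_m\varepsilon_m\varphi_m$ is $2$-Lipschitz whenever $|\varepsilon_m|\le 1$. Hence $R\gamma:=(\langle\varphi_m,\gamma\rangle)_m$ defines a bounded operator $R\colon\F(M,e)\to\ell_1$. To see this, check $\|R\gamma\|_{\ell_1}\le 2\|\gamma\|$ on finitely supported $\gamma$, where only finitely many coordinates are non-zero, and extend by density. Being bounded, $R$ is weak-to-weak continuous, so $R\gamma_n\to R\gamma$ weakly, hence in norm by the Schur property. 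But $R\gamma=0$, because $\varphi_m(y_j)=0$ for all $m,j$. On the other hand $\|R\gamma_n\|_{\ell_1}\ge|\langle\varphi_n,\gamma_n\rangle|=|c|\rho$. This is the contradiction, and it does not care about cross terms.

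Alternatively, you can avoid cross hits directly. First rule out Cauchy behaviour for \emph{every} bad cluster, pass to a subsequence on which all bad clusters are separated, and take $\rho$ below all the separation constants. Then each bump support is visited at only finitely many times, and each time visits at most $p$ supports. So a further subsequence has no cross hits at all, and alternating signs work.

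In either version, $\rho$ must be chosen after the separation constant of the extracted subsequence, not merely $\rho<r/3$. With this repair your argument becomes a legitimate self-contained replacement for the citation of \cite{ACP21}.
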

	
	\begin{proof}
		The proof follows the same lines as in \cite{Tapia}, using \cite[Lemma~2.2]{ACP21} to identify the possible limits of supports of weakly convergent finitely supported elements. A cancellation at a point different from $e$ can
		only occur if the sum of the coefficients corresponding to the sequences converging to that point is zero. By the hypothesis on the coefficients, this is impossible for every non-empty proper subset of $\{0,\ldots,p\}$. The only remaining possibility would be that all the sequences converge to the same point different from $e$, but then the
		limit would be zero, because the total sum of the coefficients is zero. This is excluded by the assumption $\gamma\neq 0$.
		
		Thus every non-zero atom of the weak limit is produced by at least one of the sequences $(x_k(n))$, and every sequence which does not converge to one of these atoms must converge to the basepoint $e$. The norm convergence then follows immediately from the triangle inequality (or from \cite[Theorem~C]{ACP21}), since the coefficients are fixed and the points converge. Details are left to the reader. 
	\end{proof}
	
	\begin{proposition}\label{LPprop}
		Let $f:M\to M$ be a Lipschitz map, let $e\in M$, and set $T=T_{f,e}$. Define
		$$
		\mathcal U:=\{\gamma\in\F(M,e):(T^n\gamma)_n\text{ has a w-convergent subsequence with non-zero limit}\}.
		$$
		If $\mathrm{int}(\mathcal U)\neq\emptyset$, then for every finite subset $S\subset M$ there exists an increasing sequence $(n_k)_k\subset\N$ such that $(f^{n_k}(x))_k$ converges for every $x\in S$. In particular, $\mathcal U$ is dense in $\F(M,e)$.
	\end{proposition}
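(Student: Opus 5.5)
Since $\mathcal U$ has non-empty interior, I plan to first use density of $\lspan\delta(M)$ to approximate the given finite set $S$ inside the interior. I choose an interior point and perturb it into a finitely supported element
$$
\gamma=\sum_{k=1}^p a_k\,\delta(x_k)\in \mathrm{int}(\mathcal U).
$$
The support of $\gamma$ should contain every point of $S\setminus\{e\}$, and the coefficients, after adding the basepoint term, should satisfy the hypotheses of Lemma~\ref{convsubs}. Concretely, I take an interior point, approximate it by a finitely supported vector whose support includes $S\setminus\{e\}$ (adding tiny multiples of $\delta(x)$ for $x\in S$ if necessary). Then I perturb the finitely many coefficients slightly so that they are "generic". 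Setting $a_0:=-\sum_{k\geq1}a_k$ and writing $\gamma=\sum_{k=0}^p a_k\delta(x_k)$ with $x_0:=e$ (recall $\delta(e)=0$), I require that no non-empty proper subfamily of $(a_k)_{k=0}^p$ sums to zero. This is an open dense condition on finitely many scalars, so it is compatible with staying in the open set $\mathrm{int}(\mathcal U)$.

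Next I use $\gamma\in\mathcal U$. Since $T^n\gamma=\sum_{k=1}^p a_k(\delta(f^n(x_k))-\delta(f^n(e)))$, we get
$$
T^n\gamma=\sum_{k=0}^p a_k\,\delta\bigl(x_k(n)\bigr),\qquad x_k(n):=f^n(x_k),\quad x_0(n):=f^n(e),
$$
and the coefficients sum to zero. By definition of $\mathcal U$ there is a subsequence $(n_j)$ along which $T^{n_j}\gamma$ converges weakly to some $\gamma'\neq0$. The weak limit of finitely supported elements with uniformly bounded support size and fixed coefficients is again finitely supported; this is the content of \cite[Lemma~2.2]{ACP21}, which is used in Lemma~\ref{convsubs}. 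So Lemma~\ref{convsubs} applies, with indices shifted to include $0$. Passing to a further subsequence, every sequence $(x_k(n_j))_j$ converges, either to an atom $y_k$ of $\gamma'$ or to $e$. In particular $(f^{n_j}(x))_j$ converges for every $x\in S\setminus\{e\}$, and also for $x=e$ via the index $k=0$. This yields the required increasing sequence.

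For the density claim, I take any finitely supported $\beta$ with support $S$ and note that along $(n_k)$ the vector $T^{n_k}\beta$ converges in norm, since finitely many points converge. The limit is non-zero or zero. To get density I plan to show that $\mathcal U$ contains a dense set: for generic coefficients the limit $\sum a_x\delta(\lim f^{n_k}x)$ is non-zero, because cancellation needs coincidences of limits with coefficient sums zero, which fails generically. Such generic finitely supported vectors are dense, which proves the density. The main obstacle I expect is the first step, namely securing an element of the interior with all of $S$ in its support and with generic coefficients. I also need the exact bookkeeping of the basepoint term as an extra "sequence", so that hypothesis \eqref{scalar property} really covers the family including $a_0$.
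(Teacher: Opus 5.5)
Your construction of $(n_k)$ is the paper's argument. The paper first fixes a generic finitely supported $\mu\in\mathrm{int}(\mathcal U)$ and then adds a small generic perturbation supported on $S$, whereas you do both at once. In both versions the key point is that the family $(a_k)_{k=0}^p$, with $a_0=-\sum_{k\geq1}a_k$ attached to the orbit of $e$, satisfies \eqref{scalar property}. Lemma~\ref{convsubs} then makes all the sequences $(f^{n_j}(x_k))_j$ converge, including $(f^{n_j}(e))_j$. That part is fine.

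The density step does not work as written. Your formula for the limit drops the basepoint term. With it included, for $\beta=\sum_{x\in S}b_x\delta(x)$ the limit is
$$\sum_{x\in S}b_x\,\delta(z_x)-\Bigl(\sum_{x\in S}b_x\Bigr)\delta(z_e),\qquad z_y:=\lim_k f^{n_k}(y),$$
and the full coefficient family always sums to zero. Genericity only rules out vanishing sums over non-empty proper subfamilies. So it cannot exclude the case where all the points $z_x$ ($x\in S$) and $z_e$ coincide, and then $T^{n_k}\beta\to0$ for every $\beta$ supported on $S$. Nothing in your construction prevents this, because the non-zero limit $\gamma'$ may come entirely from points of $\supp\gamma\setminus S$.

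Here is an example. Take $M=\{0\}\cup\{1/m:m\geq1\}\cup[2,3]\subset\R$, with $f(0)=0$, $f(1/m)=1/(m+1)$, $f=\mathrm{id}$ on $[2,3]$, and $e=1$. Then $\sup_n\Lip(f^n)\leq2$, and $T^n$ converges strongly to an operator $L$ with $L\delta(t)=\delta(t)-\delta(0)$ for $t\in[2,3]$ and $L\delta(x)=0$ for $x\in\{0\}\cup\{1/m:m\geq 1\}$. Hence $\mathcal U=\F(M,e)\setminus\ker L$ has non-empty interior, yet no multiple of $\delta(1/2)$ lies in $\mathcal U$. The paper's own line ``$\lspan\delta(M)\subset\mathcal U$'' has the same defect: it ignores the requirement that the limit be non-zero.

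The repair is to use $\gamma$ itself. Put $F:=\supp\gamma\supseteq S$. Along $(n_k)$, all $f^{n_k}(x)$ for $x\in F\cup\{e\}$ converge. So the norm limit $\Lambda(\beta):=\lim_kT^{n_k}\beta$ defines a linear map on $\lspan\delta(F)$ with $\Lambda(\gamma)=\gamma'\neq0$. For $\beta$ supported in $F$, $\Lambda(\beta+t\gamma)=\Lambda(\beta)+t\gamma'$ vanishes for at most one value of $t$. Hence $\beta+t\gamma\in\mathcal U$ for arbitrarily small $t$. Running the first part with $S=\supp\beta$, for an arbitrary finitely supported $\beta$, then gives density. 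Equivalently, since $\gamma'\neq0$ the limits over $F\cup\{e\}$ are not all equal, so generic coefficients on all of $F$, not just on $S$, do give a non-zero limit.
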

	
	\begin{proof}
		We only sketch the proof, since it is the proof of \cite[Proposition~3.2]{Tapia} with the additional orbit of $e$ included in the notation. Choose a non-zero finitely supported element
		$$
		\mu=\sum_{i=1}^p a_i\delta(x_i)\in\mathrm{int}(\mathcal U),
		$$
		with the coefficients slightly perturbed so that $\sum_{i\in J}a_i\neq 0$, for $\emptyset\neq J \subset \{1,\ldots,p\}$. Note that the set of scalars $\{a_j:~j=1,...,p\}\cup\{a_{p+1}\}$, with $a_{p+1}:=-\sum_{j=1}^p a_j$, satisfies the assumption~\eqref{scalar property} of Lemma~\ref{convsubs}. This perturbation is possible because $\mathrm{int}(\mathcal U)$
		is open and $\mathrm{span} \,\delta(M)$ is dense in $\F(M,e)$. Pick an increasing sequence of integers $(n_j)$ so that $(T^{n_j}\mu)_j$ converges weakly. We may write
		$$
		T^{n_j}\mu
		=
		\sum_{i=1}^p a_i\delta(f^{n_j}(x_i))
		-
		\left(\sum_{i=1}^p a_i\right)\delta(f^{n_j}(e)).
		$$
		By Lemma~\ref{convsubs}, after passing to a subsequence, all sequences $(f^{n_j}(x_i))_j$ and also $(f^{n_j}(e))_j$ converge.
		
		Now let $S\subset M$ be finite. We add to $\mu$ a sufficiently small finitely supported perturbation supported on $S$, choosing the new coefficients generically so that no non-empty partial sum of all the coefficients vanishes. Since $\mu$ belongs to $\mathrm{int}(\mathcal U)$, the perturbed element still belongs to $\mathcal U$. Applying the preceding argument to this perturbed element gives an increasing sequence $(n_k)_k$ such that $(f^{n_k}(x))_k$ converges for every $x\in S$.
		
		Finally, if $\nu$ is finitely supported, we apply the previous paragraph to $S=\supp(\nu)\cup\{e\}$. Along the corresponding subsequence, $(T^{n_k}\nu)_k$ converges in norm, hence weakly. Thus $\lspan \delta(M)\subset \mathcal U$, and since $\lspan \delta(M)$ is dense in $\F(M,e)$, the set $\mathcal U$ is dense.
	\end{proof}
	
	We can now exclude wildness.
	
	\begin{proposition}\label{wildtfe}
		Let $f:M\to M$ be a Lipschitz map and let $e\in M$. Then $T_{f,e}$ is not a wild operator.
	\end{proposition}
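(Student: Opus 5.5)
Suppose, towards a contradiction, that $T:=T_{f,e}$ is wild, i.e. $\F(M,e)=A_T\cup R_T$ with $\mathrm{int}(R_T)\neq\emptyset\neq\mathrm{int}(A_T)$. I will follow the strategy of \cite[Theorem~1.3]{Tapia}: show that $\mathrm{int}(R_T)\subset\mathcal U$, use Proposition~\ref{LPprop} to get convergent subsequences of metric orbits, and conclude that no finitely supported vector can escape to infinity. Since $A_T$ has non-empty interior and $\lspan\delta(M)$ is dense, this is a contradiction.

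\emph{Step 1: $\mathrm{int}(R_T)\subset\mathcal U$.} Let $\gamma\in\mathrm{int}(R_T)\setminus\{0\}$. Recurrence gives an increasing sequence $(n_k)$ with $T^{n_k}\gamma\to\gamma$ in norm, hence weakly, with non-zero limit. So $\gamma\in\mathcal U$. Since $\mathrm{int}(R_T)\setminus\{0\}$ is still open and non-empty (we are in infinite dimension, so removing a point does not destroy openness), we get $\mathrm{int}(\mathcal U)\neq\emptyset$.

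\emph{Step 2: apply Proposition~\ref{LPprop}.} For every finite $S\subset M$ there is an increasing sequence $(n_k)$ such that $(f^{n_k}(x))_k$ converges for all $x\in S$. Now take $\nu=\sum_{i}a_i\delta(x_i)$ finitely supported and put $S=\supp(\nu)\cup\{e\}$. Then
$$
T^{n_k}\nu=\sum_i a_i\bigl(\delta(f^{n_k}(x_i))-\delta(f^{n_k}(e))\bigr)
$$
converges in norm, so in particular $\liminf_n\|T^n\nu\|<\infty$. Hence $\nu\notin A_T$.

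\emph{Step 3: conclusion.} The set $\lspan\delta(M)$ is dense and disjoint from $A_T$. This contradicts $\mathrm{int}(A_T)\neq\emptyset$, so $T_{f,e}$ is not wild.

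The main subtlety is the hypothesis of Proposition~\ref{LPprop}, which requires $\mathrm{int}(\mathcal U)\neq\emptyset$ rather than merely $\mathcal U\neq\emptyset$. This is handled in Step 1 by using that the recurrent set itself has interior, and by checking that the recurrence limit $\gamma$ is non-zero. Beyond that, the argument rests only on Proposition~\ref{LPprop}, whose proof already carries the orbit of $e$ along, so nothing further about the basepoint needs checking.
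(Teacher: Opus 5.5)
Your argument is correct, but it takes a somewhat different route from the paper's.

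\textbf{The paper's route.} The paper does not invoke Proposition~\ref{LPprop} directly. It takes a generic finitely supported $\mu\in\mathrm{int}(R_{T_{f,e}})$ and runs a recurrence version of the support-identification argument, showing that along a subsequence $f^{n_j}(x_i)\to x_i$ and $f^{n_j}(e)\to e$. After perturbation, this gives, for every finite $S$, times $n_j$ with $f^{n_j}(x)\to x$ for all $x\in S$. Hence every finitely supported vector is recurrent and $R_{T_{f,e}}$ is dense. This is incompatible with $A_{T_{f,e}}\cap R_{T_{f,e}}=\emptyset$ and $\mathrm{int}(A_{T_{f,e}})\neq\emptyset$.

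\textbf{Your route.} You observe that $\mathrm{int}(R_{T_{f,e}})\setminus\{0\}\subset\mathcal U$, so Proposition~\ref{LPprop} applies as a black box. You then only need the weaker conclusion that the iterates of a finitely supported vector have a norm-convergent, hence bounded, subsequence. This gives $\lspan\delta(M)\cap A_{T_{f,e}}=\emptyset$, and density of $\lspan\delta(M)$ finishes the proof.

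\textbf{Comparison.} Your route is shorter. It avoids the delicate step of identifying the limits exactly as the original points, which requires stronger genericity of the coefficients (ruling out permutations of the $x_i$ and forcing $f^{n_j}(e)\to e$). It also gives Proposition~\ref{LPprop} a direct role, whereas the paper only appeals to it for the perturbation technique. The paper's route yields a stronger intermediate fact: $R_{T_{f,e}}$ either has empty interior or is dense.

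A small point: removing a point from an open set always leaves it open. What you actually need is only that $\mathrm{int}(R_{T_{f,e}})\setminus\{0\}$ is non-empty, which holds because $\F(M,e)\neq\{0\}$.
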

	
	\begin{proof}
		We follow the strategy of \cite[Theorem~1.3]{Tapia}. Suppose that $\mathrm{int}(R_{T_{f,e}})\neq\emptyset$. We claim that $R_{T_{f,e}}$ is dense in $\F(M,e)$. Indeed, choose a finitely supported element
		$$
		\mu=\sum_{i=1}^p a_i\delta(x_i)
		$$
		in the interior of $R_{T_{f,e}}$, with the coefficients slightly perturbed so that the same non-cancellation conditions as in \cite{Tapia} hold. Since $\mu$ is recurrent, there is an increasing sequence $(n_j)_j$ such that $T_{f,e}^{n_j}\mu\longrightarrow \mu$. Writing
		$$
		T_{f,e}^{n_j}\mu
		=
		\sum_{i=1}^p a_i\delta(f^{n_j}(x_i))
		-
		\left(\sum_{i=1}^p a_i\right)\delta(f^{n_j}(e)),
		$$
		and applying the same support-identification argument as in \cite{Tapia}, one obtains, after passing to a subsequence, that $f^{n_j}(e)\to e$ and $f^{n_j}(x_i)\to x_i$, for $i=1,\ldots,p$.
		The only extra point compared with \cite{Tapia} is the term involving $f^{n_j}(e)$; the required coefficient conditions ensure that it cannot converge to one of the points $x_i$ without forcing an impossible cancellation.
		
		As in Proposition~\ref{LPprop}, perturbing $\mu$ inside the open set $R_{T_{f,e}}$ shows that for every finite subset $S\subset M$ there is a sequence $(n_j)_j$ such that
		$$
		f^{n_j}(x)\longrightarrow x,
		\qquad \forall x\in S.
		$$
		Consequently, every finitely supported element of $\F(M,e)$ is recurrent for $T_{f,e}$. Since $\lspan\delta(M)$ is dense in $\F(M,e)$, the set $R_{T_{f,e}}$ is dense.
		
		Now assume, towards a contradiction, that $T_{f,e}$ is wild. Then
		$$
		\F(M,e)=A_{T_{f,e}}\cup R_{T_{f,e}},
		$$
		with both $A_{T_{f,e}}$ and $R_{T_{f,e}}$ having non-empty interior. But $A_{T_{f,e}}\cap R_{T_{f,e}}=\emptyset$, while the previous paragraph shows that $R_{T_{f,e}}$ is dense as soon as it has non-empty interior. This contradicts $\mathrm{int}(A_{T_{f,e}})\neq\emptyset$. Hence $T_{f,e}$ is not wild.
	\end{proof}

	\noindent \textbf{Acknowledgements :} The first and third authors were supported by the French ANR project No. ANR-24-CE40-0892. The fourth author was partially supported by Austrian Science Fund grant FWF P-36344N.  The second author was supported by Funda\c c\~ao de Amparo \`a Pesquisa do Estado de S\~ao Paulo  - FAPESP grants 2025/22537-3 and 2023/12916-1.
	
	\clch{The authors would like to thank Quentin Menet for pointing out an error in an earlier version of the paper.}


\end{document}